\title{Polynomial Dedekind domains with finite residue fields of prime characteristic}
\date{\today}
\author{Giulio Peruginelli\footnote{Department of Mathematics "Tullio Levi-Civita", University of Padova, Via Trieste, 63, 35121 Padova, Italy. E-mail: gperugin@math.unipd.it}
}
\numberwithin{equation}{section}
\newtheorem{Prop}[equation]{Proposition}
\newtheorem{Thm}[equation]{Theorem}
\newtheorem{Lem}[equation]{Lemma}
\newtheorem{Cor}[equation]{Corollary}
\theoremstyle{definition}\newtheorem{Def}[equation]{Definition}
\newtheorem{Rem}[equation]{Remark}
\theoremstyle{definition}
\newcommand{\Q}{\mathbb{Q}}
\newcommand{\N}{\mathbb{N}}
\newcommand{\Z}{\mathbb{Z}}
\newcommand{\PP}{\mathbb{P}}
\newcommand{\Int}{\textnormal{Int}}
\newcommand{\IntQ}{\Int_{\Q}}
\newcommand{\olK}{\overline{K}}
\newcommand{\olD}{\overline{D}}
\newcommand{\oZp}{\overline{\Z_p}}
\newcommand{\Spec}{\text{Spec}}
\newcommand{\hZ}{\widehat{\mathbb{Z}}}
\newcommand{\ohZ}{\overline{\hZ}}
\newcommand{\Pic}{\text{Pic}}
\newcommand{\uE}{\underline{E}}
\newcommand{\Gal}{\text{Gal}}
\newcommand{\oQp}{\overline{\Q_p}}
\begin{document}
\leftmark{\noindent  accepted for publication in the Pacific Journal of Math (2023).}
{\let\newpage\relax\maketitle} 

\begin{abstract}
We show that every Dedekind domain $R$   lying between the polynomial rings $\Z[X]$ and $\Q[X]$ with the property that its residue fields of prime characteristic are finite fields 
is equal to a generalized ring of integer-valued polynomials, that is, for each prime $p\in\Z$ there exists a finite subset $E_p$ of transcendental elements over $\Q$ in  the absolute integral closure $\overline{\Z_p}$ of the ring of $p$-adic integers  such that $R=\{f\in\Q[X]\mid f(E_p)\subseteq \overline{\Z_p},\forall \text{ prime }p\in\Z\}$. Moreover, we prove that the class group of $R$ is isomorphic to a direct sum of a countable family of  finitely generated abelian groups. Conversely, any group of this kind is the class group of a Dedekind domain $R$ between $\Z[X]$ and $\Q[X]$.\\

{\small \noindent Keywords: Dedekind domain, Class group, Integer-valued polynomials.

\noindent MSC Primary: 13F20, 13F05, 13B25, 20K99.}

\end{abstract}

\begin{flushright}
\vspace{-0.5cm}{\small\emph{to the everlasting memory of Robert Gilmer}}
\end{flushright}

\section{Introduction}

Given a Dedekind domain $D$, the class group of $D$  measures how far $D$ is from being a UFD and it is therefore an important object in the study of factorization problems in the ring $D$.  It is well-known that the class group of the ring of integers of a number field is a finite abelian group. In contrast with this result,  Claborn proved in \cite{Claborn} the groundbreaking result that every abelian group occurs as the class group of a suitable Dedekind domain. 

After a few years, Eakin and Heinzer showed in \cite{EakHei}  that every finitely generated abelian group is the class group of a Dedekind domain between $\Z[X]$ and $\Q[X]$.   More generally, they proved that if $V_1,\ldots,V_n$ are  distinct DVRs with same quotient field $K$ and for each $i=1,\ldots,n$, $\{V_{i,j}\}_{j=1}^{g_i}$ is a finite collection of DVRs extending $V_i$ to $K(X)$, each of which is  residually algebraic over $V_i$ (i.e., the extension of the residue fields is algebraic), then the following is a Dedekind domain:
$$R=\bigcap_{i,j}V_{i,j}\cap K[X].$$
Moreover, they also give an explicit description of the class group of such a domain $R$. By means of this result, they were able to show the aforementioned result considering suitable residually algebraic extensions of a finite set of DVRs  of $\Q$ to $\Q(X)$.

Actually, if we suppose that each  residue field extension of  $V_{i,j}$ over $V_i$  is  finite, a ring $R$ constructed as above can be represented as a ring of integer-valued polynomials in the following way. For each $i,j$, by \cite[Theorem 2.5 \& Proposition 2.2]{PerTransc}, there exists an element $\alpha_{i,j}$ in  the algebraic closure $\overline{\widehat{K}_i}$ of the $V_i$-adic completion $\widehat{K}_i$ of $K$, $\alpha_{i,j}$ transcendental over $K$, such that $V_{i,j}=V_{i,\alpha_{i,j}}=\{\varphi\in K(X)\mid \varphi(\alpha_{i,j})\in\overline{\widehat{V_i}}\}$, where $\overline{\widehat{V_i}}$ is the absolute integral closure of $\widehat{V_i}$, the completion of $V_i$. Hence,  the above ring $R$ can be represented as $R=\{f\in K[X]\mid f(\alpha_{i,j})\in\overline{\widehat{V_i}},\forall i,j\}$ (for more details, see the following section and also \cite[Remark 2.8]{PerTransc}).

More recently, Glivick\'y and \u{S}aroch  in \cite{GliSa} investigated a family of quasi-euclidean subrings of $\Q[X]$ depending on a parameter $\alpha\in\hZ$, the profinite completion of $\Z$. A ring of this family is always a B\'ezout domain (i.e., finitely generated ideals are principal) and might be a PID or not, according to the finiteness of some set of primes depending on $\alpha$ and the set of polynomials in $\Z[X]$. It has been observed in \cite{GliSgaSa} that these rings can be realized as overrings of the classical ring of integer-valued polynomials $\Int(\Z)=\{f\in\Q[X] \mid f(\Z)\subseteq\Z\}$, which is a two-dimensional non-noetherian Pr\"ufer domain; such overrings have been completely characterized in \cite{ChabPer}. We will review this representation in the following section.

In the same area, Chang generalized Eakin and Heinzer's result in \cite{Chang}   proving that there exists an almost Dedekind domain $R$ (i.e., $R_M$ is a DVR for each maximal ideal $M$ of $R$) which is not Noetherian, lies between $\Z[X]$ and $\Q[X]$ and with class group isomorphic to a direct sum of a prescribed countable family of finitely generated abelian groups. As before, assuming the finiteness of the residue field extensions of the involved DVRs, Chang's  construction falls in the class of  integer-valued polynomial rings that we consider in this paper. It is worth mentioning that the Picard group of $\Int(\Z)$ is a free abelian group of countably infinite rank (\cite{GHLS}).

Here, we provide a complete description of the class of Dedekind domains $R$ lying between $\Z[X]$ and $\Q[X]$ such that their residue fields of prime characteristic are finite fields. Throughout the paper, for short we denote the last property by saying that  $R$ has finite residue fields of prime characteristic. We remark that the residue fields of such a domain $R$ cannot be all finite fields. In fact, since $R\subseteq\Q[X]_{(q)}$, for every irreducible $q\in\Q[X]$, then the residue field of the center of the DVR $\Q[X]_{(q)}$ on $R$ is a finite extension of $\Q$, hence  an infinite field. However, since $R$ is supposed to be Dedekind (in particular, a Pr\"ufer domain) the residue fields of prime characteristic are algebraic extensions of the corresponding prime field (see for example \cite[Theorem 3.14]{PerPrufer}). Infinite algebraic extensions of the prime fields of prime charactestic are also allowed, and that will be the content of  future research in this topic.

The paper is organized as follows. We first set the notation we will use throughout the paper and introduce the class of \emph{generalized ring of integer-valued polynomials}, which are subrings of $\Q[X]$ formed by  polynomials which are simultaneously integer-valued over  different subsets of integral elements over $\Z_p$, the ring of $p$-adic integers, for $p$ running over the set of integer primes. In section \ref{PolDedekind},  we review Loper and Werner's construction of Pr\"ufer domains in \cite{LopWer} and recall  that it falls into the class of generalized rings of integer-valued polynomials, as already observed in \cite[Remark 2.8]{PerTransc}. We then characterize when a ring of their construction is  a Dedekind domain  in Theorem \ref{DedekindInt}. In order to accomplish this objective, we introduce the definition of \emph{polynomially factorizable} subsets $\uE$ of $\ohZ=\prod_p \overline{\Z_p}$ (we refer to \S \ref{notation} for the unexplained notation), which turns out to be the key assumption for such a ring to be of finite character (hence, a noetherian Pr\"ufer domain, thus Dedekind). Furthermore, we show  in Theorem \ref{RDedekind}  that every Dedekind domain  $R$ with finite residue fields of prime characteristic lying between $\Z[X]$ and $\Q[X]$  is equal to a generalized ring of integer-valued polynomials with class group equal to a direct sum of a countable family of finitely generated abelian groups. Among other things, we will also characterize the PIDs among these class of domains, generalizing the aforementioned work of \cite{GliSa} (see also \cite{GliSgaSa}). We will also give a criteria for when two such generalized rings of integer-valued polynomials are equal. Finally, in section \ref{construction}, by means of a suitable modification of Chang's construction, given a group $G$ which is the direct sum of a countable family of finitely genereated abelian groups, we prove that there exists a Dedekind domain $R$  with finite residue fields of prime characteristic, $\Z[X]\subset R\subseteq\Q[X]$, with class group $G$, thus giving a positive answer to a question raised by Chang in \cite{Chang}. Note that, by the previous results, such a domain is a generalized ring of integer-valued polynomials.

It has come to our attention that Theorem 7 of \cite{ChangGer} shows the existence of a Dedekind domain with class group equal to a direct sum of a countable family of prescribed finitely generated abelian groups. However, that construction is based on a polynomial ring with an infinite set of indeterminates and there are prime ideals with infinite residue field.

\subsection{Notation}\label{notation}

The generalized rings of integer-valued polynomials considered in this paper falls into the class of integer-valued polynomials on algebras (see for example \cite{Frisch, FrischCorr, PerWerNontrivial}), which encompasses also the classical definition of ring of integer-valued polynomials. We now recall the latter definition. Let $D$ be an integral domain with quotient field $K$ and $A$ a torsion-free $D$-algebra such that $A\cap K=D$. We may evaluate polynomials $f\in K[X]$ at any element $a\in A$ inside the extended algebra $A\otimes_D K$. The $D$-algebra $A$ clearly embeds into $A\otimes_D K$ and if $f(a)\in A$ we say that $f$ is integer-valued at $a$. In general, given a subset $S$ of $A$, we define the ring of integer-valued polynomials over $S$:
$$\Int_K(S,A)=\{f\in K[X] \mid f(s)\in A,\forall s\in S\}.$$
Note that when $A=D$ we get the usual definition of ring of integer-valued polynomials  on a subset $S$ of  $D$, and in that case we omit the subscript $K$.  If $S=D=A$, then we set $\Int(D,D)=\Int(D)$.

For an integral domain $D$, we define the Picard group of $D$, denoted by $\text{Pic}(D)$, as the quotient of the abelian group of the  invertible fractional ideals of $D$ by the subgroup generated by the nonzero principal fractional ideals, where the operation is the ideal multiplication  (see \cite[\S VIII.1]{CaCh}). If $D$ is a Dedekind domain, then $\text{Pic}(D)$ is the usual  ideal  class group of $D$.

Let $\PP$ be the set of all prime numbers. For a fixed $p\in\PP$, we adopt the following notations:
\begin{itemize}
\item[-] $\Z_{(p)}$, the localization of $\Z$ at $p\Z$.
\item[-] $\Z_p,\Q_p$, the ring of $p$-adic integers and the field of $p$-adic numbers, respectively.
\item[-] $\overline{\Q_p},\overline{\Z_p}$, a fixed algebraic closure of $\Q_p$ and the absolute integral closure of $\Z_p$, respectively.
\item[-] for a finite extension $K$ of $\Q_p$, we denote by $O_K$ the ring of integers of $K$.
\item[-] $v_p$ denotes the unique extension of the $p$-adic valuation on $\Q_p$ to $\overline{\Q_p}$.
\item[-] If $\alpha\in\oQp$, we denote the ramification index $e(\Q_p(\alpha)\mid\Q_p)$ by $e_{\alpha}$.
\item[-] $\hZ=\prod_{p\in\PP}\Z_p$ the profinite completion of $\Z$.
\item[-] $\overline{\hZ}=\prod_{p\in\PP}\overline{\Z_p}$.
\item[-] for $\alpha\in\overline{\Q_p}$, we set
$$V_{p,\alpha}=\{\varphi\in \Q(X)\mid \varphi(\alpha)\in\overline{\Z_p}\}.$$
\end{itemize}
Clearly, $V_{p,\alpha}$ is a valuation domain of $\Q(X)$ extending $\Z_{(p)}$ with maximal ideal equal to $M_{p,\alpha}=\{\varphi\in V_{p,\alpha}\mid v_p(\varphi(\alpha))>0\}$. Moreover, $V_{p,\alpha}$ is a DVR if $\alpha$ is transcendental over $\Q$ and it has rank $2$ otherwise. In the former case, the ramification index $e(V_{p,\alpha}\mid \Z_{(p)})$ is equal to  $e_{\alpha}$.  In either case, let $O_{\alpha}$ and $M_{\alpha}$ be the valuation domain and maximal ideal of $\Q_p(\alpha)$, respectively. Then, the residue field of $V_{p,\alpha}$ is equal to $O_{\alpha}/M_{\alpha}$ and $pO_{\alpha}=M_{\alpha}^e$, for some integer $e$, which is equal to $e_{\alpha}$ ( for all these results, see  \cite[Proposition 2.2 \& Theorem 2.5]{PerTransc}).

The following result mentioned in the introduction  characterizes  residually algebraic extensions of $\Z_{(p)}$ to $\Q(X)$ of a certain kind; the valuation overrings of the Dedekind domains we are dealing with belong to this class.
\begin{Thm}\label{extension Vpa}\cite[Theorem 2.5 \& Theorem 3.2]{PerTransc}
Let $W\subset \Q(X)$ be a valuation domain with maximal ideal $M$ extending $\Z_{(p)}$ for some $p\in\PP$. If $pW=M^e$ for some $e\geq 1$ and $W/M\supseteq\Z/p\Z$ is a finite extension, then there exists $\alpha\in\overline{\Q_p}$ such that $W=V_{p,\alpha}$.  
Moreover, for $\alpha,\beta\in\oQp$, we have $V_{p,\alpha}=V_{p,\beta}$ if and only if $\alpha,\beta$ are conjugate over $\Q_p$.
\end{Thm}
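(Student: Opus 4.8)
The plan is to obtain $\alpha$ as the image of $X$ under a valuation‑preserving embedding into $\overline{\Q_p}$, built from a $p$‑adic completion attached to $W$, and then to check that pulling back $\overline{\Z_p}$ along the associated evaluation map recovers exactly $W$. I would start from the value group $\Gamma$ of $v:=v_W$. Since $pW=M^e$, the ideal $M^e$ is principal, so $M$ is not idempotent; as $W$ is a valuation ring this forces $M=\pi W$ to be principal, whence $\gamma_0:=v(\pi)$ is the least positive element of $\Gamma$ and $v(p)=e\gamma_0$. A short induction shows $[0,n\gamma_0)\cap\Gamma\subseteq\Z\gamma_0$ for every $n$, so $H:=\Z\gamma_0$ is a convex subgroup; the coarsening $W'\supseteq W$ determined by $H$ has $p$ as a unit, hence $W'\supseteq\Q$. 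Since a valuation ring of $\Q(X)$ containing the field $\Q$ has rank at most $\mathrm{trdeg}(\Q(X)/\Q)=1$, and is discrete with residue field finite over $\Q$ when nontrivial, exactly two cases remain: $\Gamma\cong\Z$, so $W$ is a DVR; or $\Gamma\cong\Z\times\Z$ lexicographically and $W$ is the composite of such a $W'$ with a discrete valuation $\bar v$ of $\kappa(W')$ — and here $W/M$ finite over $\Z/p\Z$ and $W\cap\Q=\Z_{(p)}$ force $\bar v|_{\Q}=v_p$, so $\bar v$ lies over $p$.

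\smallskip

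In the DVR case I would pass to the $M$‑adic completion $\widehat W$: a complete discrete valuation ring of mixed characteristic $(0,p)$, with residue field $W/M$ finite and ramification index $e$ over $\Z_p$ (the completion of $\Z_{(p)}$), hence $\widehat W=O_K$ for a finite extension $K/\Q_p$ with $[K:\Q_p]=e\cdot[W/M:\Z/p\Z]$, by the structure theory of complete discrete valuation rings. The completion map is an isometric embedding $\Q(X)\hookrightarrow\widehat{\Q(X)}=K\subseteq\overline{\Q_p}$, and being a $\Q$‑algebra homomorphism it is $\varphi\mapsto\varphi(\alpha)$, where $\alpha$ is the image of $X$. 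As the map is injective it cannot kill the minimal polynomial of $\alpha$, so $\alpha$ is transcendental over $\Q$; moreover $\Q_p(\alpha)=K$. Because the map preserves the valuation, $\varphi\in W\iff v_p(\varphi(\alpha))\ge 0\iff\varphi(\alpha)\in O_K=\overline{\Z_p}\cap K$, and since $\varphi(\alpha)\in K$ this is exactly $\varphi\in V_{p,\alpha}$. Thus $W=V_{p,\alpha}$, with $e_\alpha=e$ and residue field $W/M$, consistent with the notation introduced above.

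\smallskip

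In the rank‑two case, $W'$ is a rank‑one valuation of $\Q(X)$ trivial on $\Q$, hence $m(X)$‑adic for an irreducible $m\in\Q[X]$ (the place at infinity does not contain $\Z[X]$ and is excluded by the hypotheses). Write $\kappa(W')=\Q[X]/(m)=\Q(\beta)$, a number field; the valuation $\bar v$ lies over $p$, and the embedding $\Q(\beta)\hookrightarrow\Q(\beta)_{\bar v}\subseteq\overline{\Q_p}$ it determines sends $\beta$ to an element $\alpha\in\overline{\Q_p}$ algebraic over $\Q$ with minimal polynomial $m$. Unwinding the composite: for $\varphi\in\Q(X)$ one has $\varphi\in W$ iff $\varphi$ has nonnegative $m$‑adic order and, when that order vanishes, $v_p(\varphi(\alpha))\ge0$; since $\varphi(\alpha)=0$ precisely when the $m$‑adic order is positive, this is equivalent to $\varphi(\alpha)\in\overline{\Z_p}$. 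Hence $W=V_{p,\alpha}$, with $O_\alpha,M_\alpha$ the valuation data of $\Q_p(\alpha)=\Q(\beta)_{\bar v}$.

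\smallskip

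The crux is the value‑group analysis of the first paragraph — extracting the dichotomy ``DVR versus lexicographic $\Z\times\Z$'' from $pW=M^e$ together with finiteness of the residue extension. With that in hand the DVR case is essentially the classification of complete discrete valuation rings with finite residue field, and the rank‑two case is elementary bookkeeping about composite valuations, the only delicate point being to verify that evaluation at the \emph{algebraic} parameter $\alpha$ reproduces $W$ exactly rather than merely a valuation it dominates.
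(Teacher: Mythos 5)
This statement is quoted from \cite{PerTransc} and the paper gives no proof of it, so there is nothing internal to compare with line by line; judged on its own, your strategy (the value-group analysis giving the dichotomy ``DVR versus rank two over an $m$-adic coarsening'', then in the DVR case the embedding of $\Q(X)$ into its completion, a finite extension $K$ of $\Q_p$, with $\alpha$ the image of $X$ and $W$ pulled back from $O_K$, and in the rank-two case the bookkeeping with a $p$-adic place of the number field $\Q[X]/(m)$) is sound in both of those cases and is in the spirit of the argument in the cited source. The verifications you give there (principality of $M$, convexity of $\Z\gamma_0$, the classification of complete discretely valued fields with finite residue field, transcendence of $\alpha$ from injectivity, and the unwinding of the composite) are correct.

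The one genuine gap is the sentence dismissing the place at infinity: you claim it ``does not contain $\Z[X]$ and is excluded by the hypotheses'', but the hypotheses as stated require only $\Z_{(p)}\subseteq W$, not $X\in W$. Indeed, let $W_\infty$ be the composite of the degree valuation of $\Q(X)$ over $\Q$ with the valuation ring $\Z_{(p)}$ of its residue field $\Q$. Then $W_\infty\cap\Q=\Z_{(p)}$, its value group is $\Z\times\Z$ ordered lexicographically with least positive element $v(p)$, so $pW_\infty=M$ (i.e.\ $e=1$), and $W_\infty/M\cong\Z/p\Z$; thus every quoted hypothesis holds. Yet $W_\infty$ is not of the form $V_{p,\alpha}$: for $\alpha$ transcendental over $\Q$ the ring $V_{p,\alpha}$ has rank one, while for $\alpha$ algebraic over $\Q$ the unique rank-one coarsening of $V_{p,\alpha}$ is $\Q[X]_{(m_\alpha)}$, with $m_\alpha$ the minimal polynomial of $\alpha$ over $\Q$, which contains $X$ -- whereas the rank-one coarsening of $W_\infty$ is the place at infinity, which does not. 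So your rank-two case is incomplete as written, and the missing step cannot be derived from the hypotheses as quoted; it becomes correct once one adds the (implicitly intended) assumption $X\in W$, equivalently $\Z_{(p)}[X]\subseteq W$ -- which holds in every application in this paper, since the $W$'s arising here are localizations of rings containing $\Z[X]$ -- or an equivalent hypothesis ruling out the infinite-place coarsening, as in the formulation of the result in \cite{PerTransc}. You should either state such a hypothesis explicitly or treat the infinite-place composite by a separate argument; everything else in your proof stands.
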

 Clearly, if $W$ is as in the assumptions of Theorem \ref{extension Vpa} and $\Z[X]\subset W$, then $\alpha\in\oZp$.

Given $f\in\Q[X]$, the evaluation of $f(X)$ at an element  $\alpha=(\alpha_p)\in\overline{\hZ}$ is done componentwise:
$$f(\alpha)=(f(\alpha_p))\in\prod_{p\in\PP}\overline{\Q_p}.$$
We say that $f$ is \emph{integer-valued} at $\alpha$ if $f(\alpha)\in\overline{\hZ}$, which is equivalent to  $f\in V_{p,\alpha_p}$ for all $p\in\PP$. 

\begin{Def}
Given a subset $\uE$ of $\ohZ$, we define the \emph{generalized ring of integer-valued polynomials on $\uE$} as:
$$\IntQ(\uE,\ohZ)=\{f\in\Q[X]\mid f(\alpha)\in\ohZ,\forall \alpha\in\uE\}.$$
\end{Def}
If $\uE=\hZ$, then  $\IntQ(\hZ,\ohZ)=\IntQ(\hZ,\hZ)=\Int(\Z)$; in fact, the first equality follows easily from the fact that the polynomials have rational coefficients, for the last equality see \cite[Remark 6.4]{ChabPer} (essentially, $\Z$ is dense in $\hZ$).  We recall that the family of overrings of $\Int(\Z)$ which are  contained in $\Q[X]$ is formed exactly by the rings $\Int_{\Q}(\uE,\hZ)$, as $\underline E$ ranges through the subsets of $\hZ$ of the form $\prod_{p\in\PP} E_p$, where for each prime $p$, $E_p$ is a closed (possibly empty) subset of $\Z_p$ (see \cite[Theorem 6.2]{ChabPer}). In the study of a generalized ring of integer-valued polynomials $\Int_{\Q}(\uE,\ohZ)$, without loss of generality we may suppose that the subset $\uE$ of $\ohZ$ is of the form $\uE=\prod_{p\in\PP} E_p$ (see the arguments given in \cite[Remark 6.3]{ChabPer}). Note that we allow each component  $E_p$ of $\uE$ to be equal to the empty set.
\vskip1cm

\section{Polynomial Dedekind domains}\label{PolDedekind}

Loper and Werner exhibit a construction of Pr\"ufer domains  between $\Z[X]$ and $\Q[X]$ in \cite{LopWer} in order to show the existence of a Pr\"ufer domain strictly contained in $\Int(\Z)$.  As in the work of Eakin and Heinzer \cite{EakHei} mentioned in the Introduction, their construction is obtained by intersecting a suitable family of valuation domains of $\Q(X)$ indexed by $\PP$ with $\Q[X]$. A valuation domain of this family is  equal to $V_{p,\alpha}$, for some $\alpha\in\overline{\Z_p}$, by Theorem \ref{extension Vpa}  and the fact that $X$ is in  every valuation domain of this family. As it has already been observed in \cite[Remark 2.8]{PerTransc}, a ring of the Loper and Werner's construction can be represented as a generalized ring of integer-valued polynomials $\IntQ(\uE,\ohZ)$, for a suitable subset $\uE$ of $\ohZ$ which satisfies the following definition.
\begin{Def}
Let $\uE=\prod_{p\in\PP} E_p\subset\ohZ$. We say that $\uE$ is locally bounded, if, for each prime $p$, $E_p$ is a subset of $\overline{\Z_p}$ of bounded degree, that is, $\{[\Q_p(\alpha):\Q_p]\mid\alpha\in E_p\}$ is bounded.
\end{Def}
As we have already said above, some of the components $E_p$ of $\uE$ may be equal to the empty set. Since $\Q_p$ has at most finitely many extensions of degree bounded by some fixed positive integer, if $E_p\subset\overline{\Z_p}$ has bounded degree then   $E_p$ is contained in a finite extension of $\Q_p$. 

By Theorem \ref{extension Vpa}, a Pr\"ufer domain constructed in \cite{LopWer} can be represented as  the following intersection of valuation domains (see also  \cite{ChabPer}), where $\uE=\prod_{p\in\PP}E_p\subset \ohZ$ is locally bounded: 
\begin{equation}\label{IntQEZrepr}
\IntQ(\uE,\ohZ)=\bigcap_{p\in\PP}\bigcap_{\alpha_p\in E_p}V_{p,\alpha_p}\cap\bigcap_{q\in\mathcal{P}^{\text{irr}}}\Q[X]_{(q)}
\end{equation}
 where  $\mathcal{P}^{\text{irr}}$ denotes the set of irreducible polynomials in $\Q[X]$; note that $\cap_{q\in \mathcal{P}^{\text{irr}}}\Q[X]_{(q)}=\Q[X]$. Similarly, for the ring $\IntQ(E_p,\overline{\Z_p})=\{f\in\Q[X]\mid f(E_p)\subseteq\overline{\Z_p}\}$ we have:
\begin{equation}\label{IntQEpZprepr}
\IntQ(E_p,\overline{\Z_p})=\bigcap_{\alpha_p\in E_p}V_{p,\alpha_p}\cap\bigcap_{q\in\mathcal{P}^{\text{irr}}}\Q[X]_{(q)}.
\end{equation}
In particular,  $\IntQ(\uE,\ohZ)=\bigcap_{p\in\PP}(\Z\setminus p\Z)^{-1}\IntQ(\uE,\ohZ)=\bigcap_{p\in\PP}\IntQ(E_p,\overline{\Z_p})$ by Lemma \ref{localization}. 

By means of the representation \eqref{IntQEZrepr}, the main result of Loper and Werner (\cite[Corollary 2.12]{LopWer})  can now be restated as follows:
\begin{Thm}\label{locally bounded Prufer}
Let $\uE\subset\ohZ$ be locally bounded. Then  the ring $\IntQ(\uE,\ohZ)$ is a Pr\"ufer domain.
\end{Thm}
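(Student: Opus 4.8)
The statement is \cite[Corollary 2.12]{LopWer} read through the representation \eqref{IntQEZrepr}, so the plan is to recall why an intersection of valuation domains of that shape is a Pr\"ufer domain. Write $R=\IntQ(\uE,\ohZ)$. A domain is Pr\"ufer precisely when all its localizations at maximal ideals are valuation domains, so fix a maximal ideal $M$ of $R$; since $\Z[X]\subseteq R\subseteq\Q[X]$ forces $S^{-1}R=\Q[X]$ for $S=\Z\setminus\{0\}$, the prime $M\cap\Z$ of $\Z$ is either $(0)$ or $p\Z$ for a unique prime $p$. If $M\cap\Z=(0)$, then $R_M$ is a localization of the PID $\Q[X]$ at a nonzero prime, hence a DVR, and there is nothing to do. If $M\cap\Z=p\Z$, one first reduces to the single prime $p$: by \eqref{IntQEpZprepr}, and because localizing each factor $\IntQ(E_q,\overline{\Z_q})$ with $q\neq p$ at $S_p=\Z\setminus p\Z$ returns $\Q[X]$ (for $f\in\Q[X]$ the values $v_q(f(\alpha))$ with $\alpha\in\overline{\Z_q}$ are bounded below, so a suitable power of an integer coprime to $p$ clears the denominator), one obtains $S_p^{-1}R=S_p^{-1}\IntQ(E_p,\overline{\Z_p})$ and hence $R_M=\IntQ(E_p,\overline{\Z_p})_{M'}$ for the corresponding maximal ideal $M'$. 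It therefore suffices to prove that $\IntQ(E_p,\overline{\Z_p})$ is a Pr\"ufer domain for each $p$.

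This is the point where local boundedness is used. Since $\{[\Q_p(\alpha):\Q_p]\mid\alpha\in E_p\}$ is bounded and $\Q_p$ has only finitely many extensions of a given degree, $E_p$ lies in a single finite extension $K$ of $\Q_p$, whose valuation ring $O_K=K\cap\overline{\Z_p}$ is a DVR with finite residue field; consequently $\IntQ(E_p,\overline{\Z_p})=\IntQ(E_p,O_K)=\{f\in\Q[X]\mid f(E_p)\subseteq O_K\}$, and one may replace $E_p$ by its (compact) topological closure in $O_K$ without changing this ring. The claim is then that every localization of $\IntQ(E_p,O_K)$ at a maximal ideal lying over $p$ is a valuation domain; this is the core of \cite[Corollary 2.12]{LopWer}, and it mirrors the classical proof that $\Int(D)$ is Pr\"ufer when $D$ is a Dedekind domain with finite residue fields. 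Concretely, I would show that such a maximal ideal is of the form $M_{p,\alpha}\cap\IntQ(E_p,O_K)$ for some $\alpha$ in the closure of $E_p$ in $O_K$, that the localization there sits between $\IntQ(E_p,O_K)$ and the valuation domain $V_{p,\alpha}$ of \S\ref{notation}, and that an analysis of the $v_p$-values of polynomials near $\alpha$ pins it down as $V_{p,\alpha}$ itself or its rank-one localization --- a valuation domain in either case; Theorem \ref{extension Vpa} is what identifies the valuation overrings that occur as the $V_{p,\alpha}$.

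The main obstacle, and essentially the only part carrying genuine content, is exactly this local statement: controlling the maximal ideals of $\IntQ(E_p,O_K)$ over $p$ and proving each corresponding localization is a valuation domain, with no spurious maximal ideals appearing. It is here that the bound on the degrees is indispensable: it forces $E_p$ into a compact valuation ring $O_K$, so that its topological (equivalently, polynomial) closure is well behaved and the maximal ideals over $p$ are parametrized exactly by that closure; without a uniform bound the relevant subset of $\overline{\Z_p}$ need not be precompact and the local rings can fail to be valuation domains, so the conclusion genuinely breaks down. Granting the local statement for all primes $p$, the Pr\"ufer property of $R=\IntQ(\uE,\ohZ)$ follows by assembling the local analyses together with the trivial DVR behaviour at the maximal ideals over $(0)$.
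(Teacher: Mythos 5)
Your proposal is correct and takes essentially the same route as the paper: the paper establishes Theorem \ref{locally bounded Prufer} simply by identifying $\IntQ(\uE,\ohZ)$, via Theorem \ref{extension Vpa} and the representation \eqref{IntQEZrepr}, with a ring of the Loper--Werner construction and citing \cite[Corollary 2.12]{LopWer}, which is exactly the result your reduction to the local rings $\IntQ(E_p,\overline{\Z_p})$ ultimately rests on (your localization step is the paper's Lemma \ref{localization}). The only caveat is your aside that the conclusion ``genuinely breaks down'' without bounded degree, which is unsupported (Loper and Werner also obtain Pr\"ufer rings from unbounded sets such as all algebraic integers), but nothing in your argument depends on it.
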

We want to characterize when a  ring of the form $\IntQ(\uE,\ohZ)$, $\uE\subseteq\ohZ$, is a Dedekind domain. In order to accomplish this objective, we need to describe  the prime spectrum of this ring  when $E$ is locally bounded. It is customary for ring of integer-valued polynomials to distinguish the prime ideals into two different kinds, and we do the same here in our setting: given a prime ideal $P$ of $R=\IntQ(\uE,\ohZ)$, we say that $P$ is a \emph{non-unitary} if  $P\cap\Z=(0)$ and that $P$ is \emph{unitary} if $P\cap \Z=p\Z$ for some $p\in\PP$. 

It is a classical result that each non-unitary prime ideal of $R$ is equal to:
$$\mathfrak{P}_q=q(X)\Q[X]\cap R$$
for some $q\in \mathcal{P}^{\text{irr}}$ (see for example \cite[Corollary V.1.2]{CaCh}). 

If $P\cap\Z=p\Z$, $p\in\PP$, and $\alpha\in E_p$, the following is a unitary prime ideal of $R$:
$$\mathfrak{M}_{p,\alpha}=\{f\in R\mid v_p(f(\alpha))>0\}.$$
If $E_p$ is a closed subset of $\overline{\Z_p}$ for each prime $p$, and $\uE=\prod_p E_p$ is locally bounded, we are going to show that each unitary prime ideal of $R$ is equal to $\mathfrak{M}_{p,\alpha}$, for some $p\in\PP$ and $\alpha\in E_p$.

\begin{Lem}\label{localization}
 Let $\uE\subseteq\ohZ$ be any subset,  $P$ be a finite subset of $\PP$ and $S$ the multiplicative subset of $\Z$ generated by $\PP\setminus P$. Then $S^{-1}\IntQ(\uE,\ohZ)=\bigcap_{p\in P}\IntQ(E_p,\overline{\Z_p})$.

In particular, for each $p\in\PP$, $(\Z\setminus p\Z)^{-1}\IntQ(\uE,\ohZ)=\IntQ(E_p,\overline{\Z_p})$.
\end{Lem}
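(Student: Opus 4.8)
The plan is to prove the two inclusions separately, the common engine being the elementary estimate $v_p(g(\alpha))\ge\min_i v_p(c_i)$, valid for every $g=\sum_i c_iX^i\in\Q[X]$ and every $\alpha\in\overline{\Z_p}$ (immediate from the ultrametric inequality together with $v_p(\alpha)\ge 0$), combined with the factorization $\IntQ(\uE,\ohZ)=\bigcap_{p\in\PP}\IntQ(E_p,\overline{\Z_p})$ recorded after \eqref{IntQEpZprepr}.

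For the inclusion $\subseteq$, I would first note that for each $p\in P$ every element of $S$ is a unit of $\IntQ(E_p,\overline{\Z_p})$: indeed $S$ is generated by primes $q\in\PP\setminus P$, each different from $p$, and the constant polynomial $1/q$ takes the value $1/q\in\Z_p\subseteq\overline{\Z_p}$, whence $1/q\in\IntQ(E_p,\overline{\Z_p})$. Therefore $S^{-1}\IntQ(E_p,\overline{\Z_p})=\IntQ(E_p,\overline{\Z_p})$ for $p\in P$, and since $\IntQ(\uE,\ohZ)\subseteq\bigcap_{p\in P}\IntQ(E_p,\overline{\Z_p})$ and localization commutes with this \emph{finite} intersection of subrings of $\Q[X]$, one obtains $S^{-1}\IntQ(\uE,\ohZ)\subseteq\bigcap_{p\in P}\IntQ(E_p,\overline{\Z_p})$.

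For the reverse inclusion, I would take $g=\sum_i c_iX^i\in\bigcap_{p\in P}\IntQ(E_p,\overline{\Z_p})\subseteq\Q[X]$ and exhibit a single $s\in S$ with $sg\in\IntQ(E_p,\overline{\Z_p})$ for all $p\in\PP$, which by the factorization means $sg\in\IntQ(\uE,\ohZ)$. Let $T=\{p\in\PP\mid v_p(c_i)<0\text{ for some }i\}$, a finite set since $g$ has finitely many coefficients in $\Q$, and for $p\in T$ put $k_p=\max_i(-v_p(c_i))>0$; the candidate is $s=\prod_{p\in T\setminus P}p^{k_p}\in S$. For $p\in P$ the containment $sg\in\IntQ(E_p,\overline{\Z_p})$ holds because $g$ already lies there and $s\in\Z$; for $p\in\PP\setminus P$ with $E_p\ne\emptyset$ and $\alpha\in E_p$, the displayed estimate gives $v_p(sg(\alpha))=v_p(s)+v_p(g(\alpha))\ge v_p(s)-k_p\ge 0$, reading $k_p=0$ when $p\notin T$ (in which case $v_p(g(\alpha))\ge 0$ directly) and using $v_p(s)\ge k_p$ in every case; empty components $E_p$ are vacuous. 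This yields $g\in S^{-1}\IntQ(\uE,\ohZ)$, hence equality. The ``in particular'' statement is then the case $P=\{p\}$, since $\Z\setminus p\Z$ and the multiplicative set generated by $\PP\setminus\{p\}$ define the same localization.

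I expect the only delicate point to be the ``common denominator'' step in the reverse inclusion: a priori no single $s\in S$ need clear $g$ into $\IntQ(\uE,\ohZ)$ because $\PP\setminus P$ is infinite, but the assumption $\uE\subseteq\ohZ$, i.e.\ $E_p\subseteq\overline{\Z_p}$, confines the obstructing primes, via the estimate above, to the finitely many primes dividing a denominator of a coefficient of $g$. I would simply take care that $k_p>0$ holds exactly on $T$ and that empty components of $\uE$ are handled correctly; everything else is routine.
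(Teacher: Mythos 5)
Your proof is correct and follows essentially the same route as the paper's: the forward inclusion comes from the elements of $S$ being units in each $\IntQ(E_p,\overline{\Z_p})$ for $p\in P$, and the reverse inclusion from clearing denominators by a single product of primes outside $P$, noting that coefficient-wise $p$-integrality forces integer-valuedness on $E_p\subseteq\overline{\Z_p}$ for the primes $p\notin P$ while multiplication by an integer preserves membership at the primes in $P$. The paper phrases this via $tf\in\Z_{(q)}[X]\subset\IntQ(E_q,\overline{\Z_q})$ for $q\notin P$ rather than through your explicit valuation estimate, but the argument is the same.
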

\begin{proof}
The proof follows by an argument similar to the one of  \cite[Proposition 4.2]{ChabPer1}. Let $R=\IntQ(\uE,\ohZ)$ and $R_p=\IntQ(E_p,\overline{\Z_p})$, for each $p\in P$. The containment $S^{-1}R\subseteq\bigcap_{p\in P}R_p$ is clear, since $R\subseteq R_p$ and for every $d\in S$, $d$ is a unit in  $R_p$, for each $p\in P$. Conversely, let $f\in \bigcap_{p\in P}R_p$. Let $d\in\Z$, $d\not=0$, be such that $df\in\Z[X]$ and let $d=t\prod_{p\in P}p^{a_p}$, $a_p\geq0$ and $t\in\Z$ not divisible by any $p\in P$. Then, letting $g=tf$, we have that $g$ is in $\Z_{(q)}[X]\subset R_q$ for each $q\notin P$ and $g$ is in $R_p$ for each $p\in P$ because $t$ is a unit in $\Z_{(p)}$, $\forall p\in P$. Hence, $f=\frac{g}{t}\in S^{-1}R$, as wanted.
\end{proof}

\begin{Prop}\label{prime ideals}
Let $\uE=\prod_p E_p\subset\ohZ$ be locally bounded and closed. If $M\subset \IntQ(\uE,\ohZ)$ is a unitary prime ideal such that $M\cap\Z=p\Z$ for some $p\in\PP$, then $M$ is maximal and there exists $\alpha\in E_p$ such that $M=\mathfrak{M}_{p,\alpha}$.
\end{Prop}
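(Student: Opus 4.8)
Write $R=\IntQ(\uE,\ohZ)$ and $R_p=\IntQ(E_p,\overline{\Z_p})$. The plan is to reduce to $R_p$ via Lemma~\ref{localization} and then classify the primes of $R_p$ lying over $p\Z$. Since $M\cap\Z=p\Z$, the prime $M$ misses $S=\Z\setminus p\Z$, so $M'=S^{-1}M$ is a prime of $S^{-1}R=R_p$ with $M'\cap R=M$ and $M'\cap\Z=p\Z$; moreover any prime of $R$ containing $M$ contracts to a prime of $\Z$ containing $p\Z$, hence to $p\Z$, so it too misses $S$ — whence $M$ is maximal in $R$ exactly when $M'$ is maximal in $R_p$. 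Thus it suffices to prove: \emph{if $N$ is a prime of $R_p$ with $N\cap\Z=p\Z$, then $N$ is maximal and $N=\mathfrak{M}_{p,\alpha}$ for some $\alpha\in E_p$}; applying this to $N=M'$ and contracting to $R$ gives the statement for $M$. A fact I use throughout is that $E_p$ is \emph{compact}: being locally bounded, $E_p$ is contained in the ring of integers $O_L$ of a finite extension $L$ of $\Q_p$, and being closed in $\overline{\Z_p}$ it is a closed subset of the compact ring $O_L$, hence compact. I write $\mathfrak{m}_L$ for the maximal ideal of $O_L$, $k=O_L/\mathfrak{m}_L$ for its residue field, and $q=|k|$.

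First I would prove the maximality. Consider the ring homomorphism $\psi\colon R_p\to\prod_{\beta\in E_p}k$, $f\mapsto(f(\beta)\bmod\mathfrak{m}_L)_{\beta}$, which makes sense since $f(\beta)\in\overline{\Z_p}\cap L=O_L$ for $f\in R_p$ and $\beta\in E_p$. Its image $A=\psi(R_p)$ is a subring of a product of copies of $k$, so $x^q=x$ for every $x\in A$; hence $A$ is von Neumann regular, in particular zero-dimensional. Using the compactness of $E_p$ (and that a nonzero polynomial has finitely many roots) I would check that $\ker\psi=\bigcap_{\beta\in E_p}\mathfrak{M}_{p,\beta}=\sqrt{pR_p}$: if $v_p(f(\beta))>0$ for all $\beta\in E_p$ then $\inf_{\beta\in E_p}v_p(f(\beta))>0$, so $f^m\in pR_p$ for $m\gg0$. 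Since $N$ is prime and contains $p$, it contains $pR_p$ and hence $\ker\psi$, so $\psi(N)$ is a prime of the zero-dimensional ring $A$, hence maximal there, and therefore $N$ is maximal in $R_p$.

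Next I would produce the point $\alpha$. For $f\in R_p$ the map $\beta\mapsto f(\beta)\bmod\mathfrak{m}_L$ is continuous from $E_p$ to the discrete set $k$, so $Z(f):=\{\beta\in E_p\mid v_p(f(\beta))>0\}$ is clopen in $E_p$, and by von Neumann regularity the characteristic function $e_f:=\chi_{E_p\setminus Z(f)}$ of the support of $\psi(f)$ lies in $A$; moreover $e_f\in\psi(N)$ when $f\in N$. For $f_1,\dots,f_n\in N$ the element $1-\prod_{i=1}^n(1-e_{f_i})=\chi_{E_p\setminus(Z(f_1)\cap\cdots\cap Z(f_n))}$ expands as a sum of products each containing some $e_{f_i}$, hence lies in $\psi(N)$; and $\psi(N)$ is a proper ideal of $A$ (if $\psi(g)=1$ for some $g\in N$ then $g-1\in\ker\psi\subseteq N$, so $1\in N$). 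This forces $Z(f_1)\cap\cdots\cap Z(f_n)\neq\emptyset$. Thus $\{Z(f)\mid f\in N\}$ is a family of nonempty closed subsets of the compact space $E_p$ with the finite intersection property, and I may choose $\alpha\in\bigcap_{f\in N}Z(f)$. Then $v_p(f(\alpha))>0$ for all $f\in N$, i.e. $N\subseteq\mathfrak{M}_{p,\alpha}$; since $\mathfrak{M}_{p,\alpha}$ is the (proper) kernel of the evaluation $R_p\to k$, $f\mapsto f(\alpha)\bmod\mathfrak{m}_L$, the maximality of $N$ gives $N=\mathfrak{M}_{p,\alpha}$, as required.

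The hard part is the construction of $\alpha$ — that is, showing the clopen sets $Z(f)$, $f\in N$, have the finite intersection property. A direct attack is awkward: given $f_1,\dots,f_n\in N$ one would like to manufacture from them a single element of $N$ that is a unit of $R_p$ (a contradiction), but neither products nor sums of the $f_i$ accomplish this. The remedy is to pass to $A=\psi(R_p)$ and exploit that $A$ is von Neumann regular, which replaces each $Z(f)$ by the zero set of an actual idempotent of $A$; compactness of $E_p$ then finishes the argument. Compactness of $E_p$ is likewise the crux of the identification $\sqrt{pR_p}=\bigcap_{\beta\in E_p}\mathfrak{M}_{p,\beta}$ used in the maximality step — and these are exactly the places where the hypotheses that $\uE$ be locally bounded and closed enter.
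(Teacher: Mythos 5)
Your proof is correct, and while its first half runs parallel to the paper's, its second half takes a genuinely different route. Both arguments reduce to $R_p=\IntQ(E_p,\overline{\Z_p})$ (you via the localized prime $S^{-1}M$, the paper via the center on $R_p$ of the valuation ring $R_M$ of the Pr\"ufer domain $R$), and both prove maximality from the same underlying fact that every residue class modulo a unitary prime satisfies $x^q=x$: the paper shows $(f^q-f)^e\in pR_p$ directly using representatives modulo a uniformizer, while you package this as the homomorphism $\psi$ into $\prod_{\beta\in E_p}k$ whose image is von Neumann regular and whose kernel is $\sqrt{pR_p}$. The divergence is in the identification $M'=\mathfrak{M}_{p,\alpha}$: the paper uses the countability of $R_p$ to write $M'$ as an increasing union of finitely generated ideals $I_n\ni p$, applies \cite[Proposition 1.4]{GilmHeinz} to place each $I_n$ inside some $\mathfrak{M}_{p,\alpha_n}$, and then uses sequential compactness of $E_p$ together with continuity of polynomial maps to pass to a limit point $\alpha$; you instead use the idempotents $e_f$ of the zero-dimensional quotient to show that the clopen sets $Z(f)$, $f\in M'$, have the finite intersection property, and invoke compactness of $E_p$ just once. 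Your route buys independence from several ingredients the paper leans on: it needs neither the Pr\"ufer property of $R_p$ (Theorem \ref{locally bounded Prufer}), nor the countability of $R_p$, nor the Gilmer--Heinzer result, and it avoids extracting convergent subsequences; the paper's proof, in exchange, is shorter given the machinery already in place and reuses tools (Pr\"uferness and \cite{GilmHeinz}) that reappear in the proof of Proposition \ref{finiteZp}. One cosmetic remark: the positivity of $\inf_{\beta\in E_p}v_p(f(\beta))$ for $f\in\ker\psi$ already follows from the discreteness of the value group of the finite extension $L$ (the infimum is at least $1/e(L\mid\Q_p)$), so compactness is not actually needed at that particular step.
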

\begin{proof}
Let $R=\IntQ(\uE,\ohZ)$.  We use the fact that $R$ is a Pr\"ufer domain by Theorem \ref{locally bounded Prufer}.

Let $M$ be a  unitary prime  ideal of $R$ and let $V=R_M$. Then, by Lemma \ref{localization}, we have $R_p=\IntQ(E_p,\overline{\Z_p})\subset V$, since $(\Z\setminus p\Z)^{-1}V=V$. Let $M'$ be the center of $V$ on $R_p$. Since  $M'\cap R=M$, it is sufficient to show that $M'=\mathfrak{M}_{p,\alpha}=\{f\in R_p\mid v_p(f(\alpha))>0\}$, for some $\alpha\in E_p$ (with a slight abuse of notation, we denote the unitary prime ideals of $R$ and $R_p$ in the same way). Let $f\in R_p$. Let $K$ be a finite extension of $\Q_p$ such that $O_K$ contains $E_p$ and let $i_0,\ldots,i_{q-1}\in O_K$ be a set of representatives for $O_K/\pi O_K\cong\mathbb{F}_q$, where $\pi$ is a uniformizer of $O_K$ (i.e., a generator of the maximal ideal of $O_K$). For each $\alpha\in E_p$, there exists some $j\in\{0,\ldots,q-1\}$ such that $f(\alpha)-i_j\in \pi O_K$. In particular, $\prod_{j=0}^{q-1}(f(\alpha)-i_j)\in \pi O_K$ for each $\alpha\in E_p$. Note that the polynomials $X^q-X$ and $\prod_{j=0}^{q-1}(X-i_j)$ coincide modulo $\pi$, so in particular $f(\alpha)^q-f(\alpha)\in\pi O_K$.  If $e=e(O_K\mid \Q_p)$, we have $(f(\alpha)^q-f(\alpha))^e\in p O_K$. Equivalently, $(f^q-f)^e\in pR_p$, which is contained in $M'$. Since $M'$ is a prime ideal, it follows that $f^q-f\in M'$, so modulo $M'$, $f$ satisfies the equation $X^q-X=0$. This shows that $R_p/M'$ is contained in the finite field $\mathbb{F}_q$, so it is a finite domain, hence a field. This proves that $M'$ is maximal.  Note that, since $R/M\subseteq R_p/M'$ and the latter is a finite field, it follows also that $M$ is a maximal ideal of $R$.

Since $R_p$ is countable, $M'$ is countably generated, say $M'=\bigcup_{n\in\N}I_n$, where $I_n=(p,f_1,\ldots,f_n)$ for each $n\in\N$. By \cite[Proposition 1.4]{GilmHeinz}, for each $n\in\N$, there exists $\alpha_n\in E_p$ such that $I_n\subset \mathfrak{M}_{p,\alpha_n}$ (we may exclude the non-unitary prime ideals of $R_p$ because they do not contain $p$, hence neither $I_n$ for every $n$). Suppose first that $E_p$ is finite. Then there exists $\alpha\in E_p$ such that the set $J=\{n\in\N\mid I_n\subset\mathfrak{M}_{p,\alpha}\}$ is a cofinal subset of $\N$. Hence, for each $f\in M'$, there exists $n\in J$ such that $f\in I_n\subset\mathfrak{M}_{p,\alpha}$, so that $M'\subseteq \mathfrak{M}_{p,\alpha}$ and therefore equality holds since $M'$ is maximal. If $E_p$ is infinite, since it is a  closed subset (because $\uE$ is closed) contained in a finite extension of $\Q_p$, by compactness we may extract a sequence $\{\alpha_n\}_{n\in\N}$ from $E_p$ converging to some element $\alpha\in E_p$. Without loss of generality we suppose that $\alpha_n\to\alpha$. Now, for each $f\in M'$, $f\in I_n\subset\mathfrak{M}_{p,\alpha_n}$ for some $n$. Since $I_n\subseteq I_{n+1}$ for each $n\in\N$, $f\in \mathfrak{M}_{p,\alpha_m}$ for each $m\geq n$, that is $v_p(f(\alpha_m))>0$. By continuity we get that $v_p(f(\alpha))>0$, that is, $f\in\mathfrak{M}_{p,\alpha}$. Therefore as before we conclude that $M'=\mathfrak{M}_{p,\alpha}$.
\end{proof}

In particular, if $\IntQ(\uE,\ohZ)$ is a Pr\"ufer domain, given a maximal unitary ideal $\mathfrak{M}_{p,\alpha}$, $p\in\PP$ and $\alpha\in E_p$, we have 
\begin{equation}\label{localization unitary}
\IntQ(\uE,\ohZ)_{\mathfrak{M}_{p,\alpha}}=V_{p,\alpha}.
\end{equation}
Similarly, for $q\in\mathcal{P}^{\text{irr}}$, we have
\begin{equation}\label{localization non-unitary}
\IntQ(\uE,\ohZ)_{\mathfrak{P}_q}=\Q[X]_{(q)}.
\end{equation}
We call the valuation domains $V_{p,\alpha}$ unitary, and the others $\Q[X]_{(q)}$ non-unitary. Similar equalities hold for the Pr\"ufer domain $\IntQ(E_p,\overline{\Z_p})$.  Note that the residue field of $\IntQ(\uE,\ohZ)$ at a unitary prime ideal is a finite field (by the property of the unitary valuation overrings we discussed about in \S \ref{notation}), while the residue field of a non-unitary prime ideal of $\IntQ(\uE,\ohZ)$  is a finite extension of the rationals, hence an infinite field. 

 We finish this section with the following remark.
\begin{Rem}
By Theorem \ref{extension Vpa}, given a ring $\IntQ(E_p,\oZp)$,  without loss of generality we may assume that the elements of $E_p$ are pairwise non-conjugate over $\Q_p$. Under this further assumption and if $E_p$ is bounded (i.e., contained in a finite extension of $\Q_p$), Theorem \ref{locally bounded Prufer}, equation \eqref{localization unitary} and Proposition \ref{prime ideals} imply that there is a one-to-one correspondence between the elements of $E_p$ and the unitary valuation overrings $V_{p,\alpha_p},\alpha_p\in E_p$, of $\IntQ(E_p,\oZp)$.
\end{Rem}

\subsection{Local case}

For a fixed $p\in\PP$, we characterize in this section the subsets $E_p$ of $\overline{\Z_p}$ for which the corresponding ring of integer-valued polynomials $\IntQ(E_p,\overline{\Z_p})$ is a Dedekind domain. The following  proposition  is a generalization of  \cite[Theorem 4.3 (2)]{Chang}.

\begin{Prop}\label{finiteZp}
Let $E_p$ be a subset of $\overline{\Z_p}$. Then  $\IntQ(E_p,\overline{\Z_p})$ is a Dedekind domain  with finite residue fields of prime characteristic  if and only if $E_p$ is a finite subset of transcendental elements over $\Q$. 

 Suppose that $E_p=\{\alpha_1,\ldots,\alpha_n\}$  and the $\alpha_i$'s are pairwise non-conjugate over $\Q_p$. Then, then the class group of $\Int_{\Q}(E_p,\overline{\Z_p})$ is isomorphic to $\Z/e\Z\oplus\Z^{n-1}$, where $e=\text{g.c.d.}\{e_{\alpha_i}\mid i=1,\ldots,n\}$. In particular, $\Int_{\Q}(E_p,\overline{\Z_p})$ is a PID if and only if $E_p$ contains at most one  element  $\alpha_p\in\oZp$, such that $\alpha_p$ is transcendental over $\Q$ and unramified over $\Q_p$. 
\end{Prop}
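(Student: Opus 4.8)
The plan is to treat the two implications separately, to compute $\Pic(R)$ (with $R:=\IntQ(E_p,\overline{\Z_p})$) by identifying $R$ with an intersection of Eakin--Heinzer type, and then to read the PID criterion off the class-group formula; I begin with the easier implication. Suppose $E_p=\{\alpha_1,\ldots,\alpha_n\}$ is finite with every $\alpha_i$ transcendental over $\Q$. Since $E_p$ is locally bounded, Theorem~\ref{locally bounded Prufer} shows $R$ is a Pr\"ufer domain, and since $E_p$ is finite, hence closed, Proposition~\ref{prime ideals} (whose proof applies verbatim to $\IntQ(E_p,\overline{\Z_p})$, as this ring is $\IntQ(\uE,\ohZ)$ for $\uE$ with $E_q=\emptyset$ when $q\ne p$) together with the classical description of the non-unitary primes identifies $\Spec(R)\setminus\{(0)\}$ as $\{\mathfrak{M}_{p,\alpha_1},\ldots,\mathfrak{M}_{p,\alpha_n}\}\cup\{\mathfrak{P}_q\mid q\in\mathcal{P}^{\text{irr}}\}$; moreover no $\mathfrak{P}_q$ lies in any $\mathfrak{M}_{p,\alpha_i}$, since that would give a chain of three primes in the DVR $R_{\mathfrak{M}_{p,\alpha_i}}=V_{p,\alpha_i}$, so $\dim R=1$. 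By \eqref{localization unitary} and \eqref{localization non-unitary} every localization of $R$ at a maximal ideal is a DVR ($V_{p,\alpha_i}$ being a DVR since $\alpha_i$ is transcendental, and $\Q[X]_{(q)}$ always), so $R$ is almost Dedekind. A nonzero $f\in R$ lies in $\mathfrak{M}_{p,\alpha_i}$ for at most $n$ indices and in $\mathfrak{P}_q$ only for the finitely many $q$ dividing $f$ in $\Q[X]$, so $R$ is of finite character; an almost Dedekind domain of finite character is Dedekind. Finally the residue field of $R$ at $\mathfrak{M}_{p,\alpha_i}$ is $O_{\alpha_i}/M_{\alpha_i}$, the residue field of the finite extension $\Q_p(\alpha_i)/\Q_p$, which is finite.

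\textbf{Necessity.} Suppose $R$ is Dedekind (the finiteness of the residue fields at its unitary primes being then automatic, by the previous computation). If some $\alpha\in E_p$ were algebraic over $\Q$, with minimal polynomial $m\in\Q[X]$, then $\mathfrak{M}:=\{f\in R\mid v_p(f(\alpha))>0\}$ is the kernel of the composite of evaluation at $\alpha$, $R\to\overline{\Z_p}$, with reduction onto the residue field of $\overline{\Z_p}$, hence a proper prime ideal containing $p$; it contains the prime $\mathfrak{P}_m:=m\Q[X]\cap R$ (every element of which vanishes at $\alpha$), which is nonzero because $p^{N}m\in\Z_{(p)}[X]\subseteq R$ for $N\gg 0$, and which avoids $p$ because $\deg m\ge1$. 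Thus $(0)\subsetneq\mathfrak{P}_m\subsetneq\mathfrak{M}$ and $\dim R\ge2$, a contradiction; so every $\alpha\in E_p$ is transcendental over $\Q$, whence each $V_{p,\alpha}$ is a DVR. Since $R\subseteq V_{p,\alpha}$ and every element of $R\setminus\mathfrak{M}_{p,\alpha}$ is a unit of $V_{p,\alpha}$, the DVR $R_{\mathfrak{M}_{p,\alpha}}$ equals $V_{p,\alpha}$; hence distinct overrings $V_{p,\alpha}$ ($\alpha\in E_p$) correspond to distinct maximal ideals $\mathfrak{M}_{p,\alpha}$ of $R$, all containing $p$. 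If $E_p$ were infinite, the map $\alpha\mapsto V_{p,\alpha}$ could not collapse it to a finite set (the one analytic point, addressed below), so $p$ would lie in infinitely many maximal ideals, contradicting the finite character of a Dedekind domain. Therefore $E_p$ is finite.

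\textbf{Class group and PIDs.} Assume $E_p=\{\alpha_1,\ldots,\alpha_n\}$ with all $\alpha_i$ transcendental. As $\bigcap_{q\in\mathcal{P}^{\text{irr}}}\Q[X]_{(q)}=\Q[X]$, \eqref{IntQEpZprepr} yields $R=\Q[X]\cap\bigcap_{i=1}^{n}V_{p,\alpha_i}$, an intersection of Eakin--Heinzer type \cite{EakHei} built from the base DVR $\Z_{(p)}$ of $\Q$ and its $n$ extensions $V_{p,\alpha_i}$ to $\Q(X)$, with finite residue fields and ramification indices $e_{\alpha_i}$. I would compute $\Pic(R)=\mathrm{Div}(R)/\mathrm{Prin}(R)$ directly: in $\mathrm{Div}(R)=\big(\bigoplus_{i=1}^{n}\Z\,\mathfrak{M}_{p,\alpha_i}\big)\oplus\big(\bigoplus_{q}\Z\,\mathfrak{P}_q\big)$, each monic irreducible $q$ contributes the principal divisor $\mathrm{div}(q)=\mathfrak{P}_q+\sum_i w_i(q)\,\mathfrak{M}_{p,\alpha_i}$ ($w_i$ the normalized valuation of $V_{p,\alpha_i}$), and these relations eliminate every $\mathfrak{P}_q$, leaving the free abelian group on $\mathfrak{M}_{p,\alpha_1},\ldots,\mathfrak{M}_{p,\alpha_n}$; the remaining principal divisors come from $\Q^{*}$, where $w_i(c)=e_{\alpha_i}v_p(c)$ gives $\mathrm{div}(c)=v_p(c)\sum_i e_{\alpha_i}\,\mathfrak{M}_{p,\alpha_i}$. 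Hence
$$\Pic(R)\cong\Z^{n}/\langle(e_{\alpha_1},\ldots,e_{\alpha_n})\rangle\cong\Z^{n-1}\oplus\Z/e\Z,\qquad e=\gcd(e_{\alpha_1},\ldots,e_{\alpha_n}),$$
by the elementary divisor theorem (one could also quote Eakin--Heinzer's class-group formula). Since a Dedekind domain is a PID exactly when its class group is trivial, $R$ is a PID iff $E_p$ is a finite set of transcendental elements with $n-1=0$ and $e=1$, i.e.\ iff $E_p$ has at most one element, which --- if present --- is transcendental over $\Q$ and unramified over $\Q_p$ (the empty case giving $R=\Q[X]$).

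\textbf{The main obstacle.} The only genuinely non-formal step is showing, in the necessity direction, that an infinite $E_p$ cannot yield a domain of finite character. If $E_p$ has unbounded degree this is immediate, as infinitely many of the $V_{p,\alpha}$ are already pairwise distinct because their residue degrees or ramification indices differ. If $E_p$ has bounded degree, I would replace it by its closure (which does not change $R$): then $E_p$ lies in a compact ring of integers $O_K$ and contains distinct elements $\alpha_k\to\alpha$ with $v_p(\alpha_k-\alpha)\to\infty$, and a short $p$-adic estimate --- comparing the values $v_p(\tilde m(\alpha_k))$ for a polynomial $\tilde m\in\Q[X]$ whose coefficients approximate those of the minimal polynomial of $\alpha$ over $\Q_p$ --- shows the $V_{p,\alpha_k}$ pairwise distinct, producing infinitely many maximal ideals of $R$ above $p$. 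Everything else is bookkeeping with $\Spec(R)$ and a routine divisor computation.
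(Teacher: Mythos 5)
Your argument is essentially correct, and in the necessity direction it takes a genuinely different, and in fact lighter, route than the paper. The paper first shows every unitary valuation overring comes (up to conjugacy) from $E_p$, then passes to $\Int_{\Q_p}$ and $\Int_{\overline{\Q_p}}$ via Lemma \ref{extension Vpa to completion}, invokes the theory of pseudo-monotone sequences from \cite{PS2} to rule out pseudo-stationary and strictly pseudo-monotone sequences, deduces compactness of $E_p$ (\cite{Chabpersonal}), and only then uses Noetherianity to show every point of $E_p$ is isolated. You instead argue directly: each $\alpha\in E_p$ gives a nonzero prime $\mathfrak{M}_{p,\alpha}\ni p$ with $R_{\mathfrak{M}_{p,\alpha}}=V_{p,\alpha}$ (your overring argument for a DVR is fine and also yields transcendence, independently of your dimension argument), so infinitely many distinct rings $V_{p,\alpha}$ would put $p$ in infinitely many maximal ideals, contradicting the finite character of a Dedekind domain. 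Your sufficiency argument (Pr\"ufer by Theorem \ref{locally bounded Prufer}, spectrum via Proposition \ref{prime ideals} and \eqref{localization unitary}--\eqref{localization non-unitary}, finite character, almost Dedekind $+$ finite character $\Rightarrow$ Dedekind) matches the paper's in substance, and your explicit divisor computation of the class group is a legitimate replacement for the paper's appeal to \cite{EakHei}; the conclusions agree.

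The one place where your write-up is not a proof is exactly the step you flag: that an infinite $E_p$ produces infinitely many distinct valuation rings $V_{p,\alpha}$. Your unbounded-degree remark is fine, but in the bounded-degree case the closure-plus-$p$-adic-estimate argument is only sketched (a single $\tilde m\in\Q[X]$ can separate only finitely many $\alpha_k$ at a time, differing values of $v_p(\tilde m(\alpha_k))$ must still be converted into differing rings via elements such as $\tilde m^{a}/p^{b}$, and one must handle the possibility that the limit point of the closure is algebraic over $\Q$). None of this detour is needed: the paper's own cited result \cite[Theorem 3.2]{PerTransc} states that $V_{p,\alpha}=V_{p,\beta}$ if and only if $\alpha,\beta$ are conjugate over $\Q_p$, and every conjugacy class in $\overline{\Q_p}$ is finite; hence an infinite $E_p$ (of bounded degree or not) automatically yields infinitely many distinct $V_{p,\alpha}$, with no need to pass to the closure or to use compactness. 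Replacing your sketched estimate with that citation closes the gap and makes your necessity argument both complete and shorter than the paper's.
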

\begin{proof}
Let $R_p=\IntQ(E_p,\overline{\Z_p})$. Note that, if $E_p$ is the empty set, then $R_p=\Q[X]$. We assume henceforth that $E_p\not=\emptyset$.

Suppose $R_p$ is a Dedekind domain  with finite residue fields of prime characteristic. We show first that each maximal unitary ideal $M$ of $R_p$ is equal to $\mathfrak{M}_{p,\alpha_p}$, for some $\alpha_p\in E_p$. Let $V$ be  a unitary valuation overring of $R_p$ which is centered on $M$. By Theorem \ref{extension Vpa}, there exists $\alpha_0\in\overline{\Z_p}$ such that $V=V_{p,\alpha_0}$. Then, $M=\mathfrak M_{p,\alpha_0}$. Since $M$ is finitely generated and $R_p$ is Pr\"ufer, by \cite[Proposition 1.4]{GilmHeinz} $M\subseteq \mathfrak M_{p,\alpha_p}$ for some $\alpha_p\in E_p$ (we may exclude the non-unitary prime ideals of $R_p$ because they do not contain $p$, hence neither $M$). Since $M$ is maximal, it follows that $M=\mathfrak M_{p,\alpha_p}$, which means that $\alpha_0$ and $\alpha_p$ are conjugate over $\Q_p$  by \cite[Theorem 3.2]{PerTransc}. Hence, without loss of generality, we may suppose that $\alpha_0\in E_p$.  Note that each $\alpha_p\in E_p$ is transcendental over $\Q$, otherwise the valuation overring $V_{p,\alpha_p}$ of $R_p$ would have rank $2$. Since $R_p$ is Dedekind, $p$ is contained in only finitely many maximal ideals of this ring; necessarily, such ideals are unitary. By the previous argument, such ideals are equal to $\mathfrak M_{p,\alpha_p}$, for $\alpha_p\in E_p$. Since by  Theorem \ref{extension Vpa} and \eqref{localization unitary}, $\mathfrak M_{p,\alpha_p}=\mathfrak M_{p,\beta_p}$ if and only if $\alpha_p,\beta_p\in E_p$ are conjugate over $\Q_p$, it follows
that $E_p$ is a finite subset of $\oZp$.

Conversely, suppose now that $E_p\subset\overline{\Z_p}$ is a finite subset of transcendental elements over $\Q$.  The fact that $\IntQ(E_p,\overline{\Z_p})$ is a Dedekind domain follows from \cite[Theorem]{EakHei}, but we give a different self-contained argument  based on the previous results. In particular, $E_p$ has bounded degree, so, by  Theorem \ref{locally bounded Prufer},  $R_p$ is Pr\"ufer. By \eqref{IntQEpZprepr}, $R_p$ is equal to an intersection of DVRs which are essential over it. Moreover, each non-zero $f\in R_p$ belongs to finitely many maximal ideals, since $E_p$ is finite and $f$ has finitely many irreducible factors in $\Q[X]$. Hence, $R_p$ is a Krull domain, so, by  \cite[Theorem 43.16]{Gilm}, $R_p$ is a Dedekind domain. Finally,  $R_p$ has  finite residue fields of prime characteristic, because each of the  unitary  valuation overring of $R_p$ (namely, $V_{p,\alpha_p}$, $\alpha_p\in E_p$)  has finite residue field.

 Assuming that the elements of $E_p$ are pairwise non-conjugate over $\Q_p$, the claim regarding the class group follows easily from \cite[Theorem]{EakHei}, taking into account the  representation \eqref{IntQEpZprepr}. If $E_p=\{\alpha_1,\ldots,\alpha_n\}$, let $\bold{e}=(e_{\alpha_1},\ldots,e_{\alpha_n})\in\Z^n$ and $e=GCD(e_{\alpha_1},\ldots,e_{\alpha_n})$. Then, the class group of $R_p$ is isomorphic to
$$\Z^{n}/<\bold{e}>\cong\Z/e\Z\oplus\Z^{n-1}.$$
The last claim follows at once from the description of the class group.
\end{proof}

\subsection{Global case}\label{generalized not Noetherian}

If, for each $p\in\PP$,  $E_p\subset\overline{\Z_p}$ is a finite subset of transcendental elements over $\Q$  and $\uE=\prod_p E_p$, then, by \cite[Corollary 2.6]{Chang}, $\IntQ(\uE,\ohZ)$ is an almost Dedekind domain. However, this  ring might not be Noetherian, that is, a Dedekind domain. See for example the construction of \cite[Theorem 3.1]{Chang}, in which the polynomial $X$ is divisible by infinitely many primes $p\in\PP$. In general, an almost Dedekind domain $R$ is Dedekind if and only if it has finite character, that is, each non-zero $f\in R$ belongs to finitely many maximal ideals of $R$ (\cite[Theorem 37.2]{Gilm}), or, equivalently, $v(f)\not=0$ only for finitely many valuation overrings $V$ of $R$ (which are only DVRs). We aim to characterize the subsets $\uE=\prod_p E_p$ of $\ohZ$ for which $\IntQ(\uE,\ohZ)$ is Dedekind. 

We introduce the following definition.
\begin{Def}
We say that $\uE$ is \emph{polynomially factorizable} if, for each $g\in\Z[X]$ and $\alpha=(\alpha_p)\in\uE$, there exist $n,d\in\Z$, $n,d\geq 1$ such that $\frac{g(\alpha)^n}{d}$ is a unit of $\ohZ$, that is, $v_p(\frac{g(\alpha_p)^n}{d})=0$, $\forall p\in\PP$.
\end{Def}
Note that $g(\alpha)^n=(g(\alpha_p)^n)\in\ohZ$. Loosely speaking, a subset $\uE$ of $\ohZ$ is polynomially factorizable if, for every $g\in \Z[X]$ and $\alpha\in\uE$, $g(\alpha)\in\ohZ$  is divisible only by finitely many   primes $p\in\PP$ (up to some exponent $n\geq1$), or, equivalently, all but finitely many components of $g(\alpha)$ are units.  Note that, if the above condition of the definition holds, then $g(\alpha)^n$ and $d$ generate the same principal ideal of $\ohZ$.

The next lemma gives a simple characterization of polynomially factorizable subsets $\uE$ of $\ohZ$ in terms of the finiteness of some sets of primes associated to every polynomial in $\Z[X]$.
For every $g\in\Z[X]$ and subset $\uE=\prod_p E_p\subseteq\ohZ$, we set
$$\PP_{g,\uE}=\{p\in\PP \mid \exists \alpha_{p}\in E_{p} \text{ such that }v_p(g(\alpha_{p}))>0\}.$$
The next result shows that $\uE$ is polynomially factorizable if and only if $\PP_{g,\uE}$ is finite for every $g\in\Z[X]$.

\begin{Lem}\label{pol factorizable}
Let $g\in\Z[X]$ and $\uE=\prod_p E_p\subset\ohZ$, where each  $E_p\subset\overline{\Z_p}$ is a closed set of  transcendental elements over $\Q$. Then the following conditions are equivalent:
\begin{itemize}
\item[i)] the set $\PP_{g,\uE}$ is finite.
\item[ii)] for each $\alpha\in\uE$, there exist $n,d\in\Z$, $n,d\geq1$  such that $\frac{g(\alpha)^n}{d}$ is a unit of $\ohZ$.
\end{itemize}
\end{Lem}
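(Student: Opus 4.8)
The plan is to prove the two implications separately; both arguments are short and purely local, the key point being that for $\alpha_p\in E_p\subseteq\overline{\Z_p}$ the value $v_p(g(\alpha_p))$ is a nonnegative element of the group $\frac{1}{e_{\alpha_p}}\Z$ (normalizing $v_p(p)=1$), so its denominator divides the ramification index $e_{\alpha_p}$. First I dispose of the trivial case in which some $E_p$ is empty: then $\uE=\emptyset$, the set $\PP_{g,\uE}$ is empty hence finite, and (ii) holds vacuously, so one may assume every $E_p\neq\emptyset$. Throughout I use that $(u_p)_p\in\ohZ$ is a unit of $\ohZ$ precisely when $v_p(u_p)=0$ for every $p\in\PP$.

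For the implication (i)$\Rightarrow$(ii), fix $\alpha=(\alpha_p)\in\uE$. If $p\notin\PP_{g,\uE}$ then $v_p(g(\alpha_p))=0$, since it is $\geq 0$ because $g\in\Z[X]$ and $\alpha_p\in\overline{\Z_p}$, and it cannot be positive as $\alpha_p\in E_p$ and $p\notin\PP_{g,\uE}$. For the finitely many $p\in\PP_{g,\uE}$ write $v_p(g(\alpha_p))=a_p/e_{\alpha_p}$ with $a_p\in\Z_{\geq 0}$, and set $n=\prod_{p\in\PP_{g,\uE}}e_{\alpha_p}$ and $d=\prod_{p\in\PP_{g,\uE}}p^{\,na_p/e_{\alpha_p}}$, both well defined since the product is finite and each $na_p/e_{\alpha_p}$ is an integer. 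A direct computation of $v_q(g(\alpha_q)^n/d)$ at each prime $q$ — it vanishes for $q\notin\PP_{g,\uE}$ because then $v_q(g(\alpha_q))=0=v_q(d)$, and it vanishes for $q\in\PP_{g,\uE}$ by the choice of the exponent of $q$ in $d$ — shows that $g(\alpha)^n/d$ is a unit of $\ohZ$.

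For (ii)$\Rightarrow$(i) I argue by contraposition. Suppose $\PP_{g,\uE}$ is infinite; for each $p\in\PP_{g,\uE}$ choose $\alpha_p\in E_p$ with $v_p(g(\alpha_p))>0$, which exists by the definition of $\PP_{g,\uE}$, and choose $\alpha_p\in E_p$ arbitrarily for the remaining primes, obtaining $\alpha=(\alpha_p)\in\uE$. If there existed $n\geq 1$ and a nonzero $d\in\Z$ with $v_p(g(\alpha_p)^n/d)=0$ for all $p$, then $v_p(d)=n\,v_p(g(\alpha_p))>0$ for every $p\in\PP_{g,\uE}$, so the nonzero integer $d$ would be divisible by infinitely many primes, which is impossible. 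Hence (ii) fails for this $\alpha$.

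I do not anticipate any serious obstacle. The only step requiring a little care is (i)$\Rightarrow$(ii), where one must exploit that the denominator of $v_p(g(\alpha_p))$ is bounded — in fact it divides $e_{\alpha_p}$ — so that raising $g(\alpha)$ to a suitable power $n$ clears these denominators simultaneously and makes $g(\alpha)^n$ generate the same ideal of $\ohZ$ as a genuine rational integer $d$; the rest is bookkeeping with the componentwise description of $\ohZ$ and of its unit group.
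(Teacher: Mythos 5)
Your argument is correct and essentially the same as the paper's: for (i)$\Rightarrow$(ii) you clear the (finitely many, bounded) denominators of the values $v_p(g(\alpha_p))$ by a common power $n$ and take $d$ supported on the finitely many primes of $\PP_{g,\uE}$, and for (ii)$\Rightarrow$(i) you argue by contraposition, assembling an $\alpha\in\uE$ with $v_p(g(\alpha_p))>0$ at infinitely many $p$ so that no integer $d$ can match; your only addition is making explicit that the denominator divides $e_{\alpha_p}$, which the paper leaves implicit. One small caveat: your preliminary remark that some $E_p=\emptyset$ forces $\PP_{g,\uE}=\emptyset$ is not accurate, since $\PP_{g,\uE}$ is defined from the individual components $E_p$ rather than from the product $\uE$; however, this degenerate case is tacitly excluded in the paper's proof as well (it too picks an element from every $E_p$), so it does not affect the substance of your argument.
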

\begin{proof}
We use the following easy remark: for $\alpha=(\alpha_p)\in\hZ=\prod_p \Z_p$, the set $\{p\in\PP\mid v_p(\alpha_p)>0\}$ is finite if and only if there exists $d\in\Z$,  $d\geq 1$,  such that $\alpha\hZ=d\hZ$.

Suppose i) holds and let $\alpha=(\alpha_p)\in\uE$. By assumption, there are only finitely many $p\in\PP$ such that  $v_p(g(\alpha_p))>0$, for some $\alpha_p\in E_p$, say, $p_1,\ldots,p_k$. Let $\alpha\in\uE$ be fixed; in particular, there exists $n\in\N$ such that $nv_p(g(\alpha_p))=a_p\in\Z$ for each prime $p$ (where $a_p=0$ for all $p\notin\{p_1,\ldots,p_k\}$). Hence, if we let $d=\prod_{i=1}^k p_i^{a_{p_i}}$ we get $v_p(g(\alpha_p)^n)=v_p(d)$ for all $p\in\PP$, hence ii) holds.

Assume now that ii) holds and suppose that $\PP_{g,\uE}$ is infinite. For each $p\in\PP_{g,\uE}$, let $\alpha_p\in E_p$ be such that $v_p(g(\alpha_p))>0$ and consider the element $\alpha=(\alpha_p)\in \uE$, where $\alpha_p$ is any element of $E_p$ for $p\notin\PP_{g,\uE}$. If there is no $n\geq 1$ such that $nv_p(g(\alpha_p))=a_p\in\Z$ for all $p\in\PP$ we immediately get a contradiction. Suppose instead that such an $n$ exists. Since $a_p$ is non-zero for infinitely many $p\in\PP$, there is no $d\in\Z$ such that $v_p(\frac{g(\alpha_p)^n}{d})=0$ for each $p\in\PP$, which again is a contradiction.

The final sentence of the statement follows immediately from the above arguments. \end{proof}

\begin{Rem}
By Lemma \ref{pol factorizable}, it follows easily that  a subset $\uE\subseteq\ohZ$ is polynomially factorizable if and only if $\PP_{g,\uE}$ is finite for each irreducible $g\in\Z[X]$. In fact, if $g=\prod_i g_i$, where $g_i\in\Z[X]$ are irreducible, then $\PP_{g,\uE}=\bigcup_i \PP_{g_i,\uE}$.

It is well-known that, given a  nonconstant  $q\in\Z[X]$, there exist infinitely many $p\in\PP$ for which there exists $n\in\Z$ such that $q(n)$ is divisible by $p$ (see for example the proof of \cite[Prop. V.2.8]{CaCh}). In particular, $\hZ$ is not polynomially factorizable by Lemma \ref{pol factorizable}.
\end{Rem}

The next lemma describes the Picard group of  $\IntQ(\uE,\ohZ)$ in terms of the Picard groups of the localizations $\IntQ(E_p,\overline{\Z_p})$, $p\in \PP$ (see Lemma \ref{localization}).
\begin{Lem}\label{Picdirectsum}
Let $\underline{E}=\prod_p E_p\subset\overline{\hZ}$ be a subset. Then
$$\Pic(\IntQ(\uE,\ohZ))\cong\bigoplus_{p\in\PP}\Pic(\IntQ(E_p,\overline{\Z_p})).$$
\end{Lem}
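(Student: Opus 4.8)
The plan is to show that the extension‑of‑ideals maps assemble into an isomorphism
$$\Phi\colon\Pic(R)\ \xrightarrow{\sim}\ \bigoplus_{p\in\PP}\Pic(R_p),\qquad [I]\longmapsto\bigl([IR_p]\bigr)_{p},$$
where I abbreviate $R=\IntQ(\uE,\ohZ)$ and $R_p=\IntQ(E_p,\overline{\Z_p})$, so that $R_p=S_p^{-1}R$ with $S_p=\Z\setminus p\Z$ by Lemma \ref{localization}. That each $[I]\mapsto[IR_p]$ is a well‑defined group homomorphism is formal: localization is flat, multiplicative on ideals, and carries an invertible (resp.\ principal) fractional ideal to one of the same type. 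Three elementary facts are used throughout. First, $(\Z\setminus\{0\})^{-1}R=\Q[X]$, since $\Z[X]\subseteq R\subseteq\Q[X]$. Second, for distinct primes $p\ne p'$ the compositum $R_pR_{p'}$ equals $\Q[X]$: the multiplicative subset of $\Z$ generated by $(\Z\setminus p\Z)\cup(\Z\setminus p'\Z)$ contains every prime integer and hence inverts all of $\Z$. Third, $R\cap\Q=\Z$, so the units of each $R_p$ are exactly the units of $\Z_{(p)}$. Finally, a normalization is invoked repeatedly: since invertible ideals are finitely generated and $\Q[X]$ is a PID, every class of $\Pic(R)$ (and likewise of $\Pic(R_p)$) is represented by an ideal $(f_1,\dots,f_n)R$ with $f_i\in\Z[X]$ and $\gcd_{\Q[X]}(f_1,\dots,f_n)=1$, obtained by multiplying a given invertible fractional ideal by suitable principal fractional ideals generated by nonzero rationals and by nonzero polynomials, which do not affect the class.

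\emph{Image of $\Phi$ lies in the direct sum, and $\Phi$ is injective.} Normalize a representative $I$ of a class as above. From $\gcd_{\Q[X]}(f_i)=1$ one gets, clearing denominators in a relation $\sum u_if_i=1$, an integer $d\ge1$ with $d\in(f_1,\dots,f_n)\Z[X]\subseteq I$; for every prime $p\nmid d$ the element $d$ is a unit of $R_p$, so $IR_p=R_p$ and $[IR_p]=0$, whence $\Phi([I])\in\bigoplus_p\Pic(R_p)$. For injectivity, suppose $[IR_p]=0$ for all $p$ and normalize $I$ (this only multiplies it by principal ideals, which stay principal after localizing). Then $IR_p\cdot\Q[X]=I\Q[X]=\Q[X]$, so a generator of the principal ideal $IR_p$ is a nonzero constant and $IR_p=R_p$ for every $p$. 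Now take any maximal ideal $\mathfrak m$ of $R$: if $\mathfrak m\cap\Z=p\Z$ then $S_p\subseteq R\setminus\mathfrak m$, so $R_{\mathfrak m}$ is a localization of $R_p$ and $IR_{\mathfrak m}=(IR_p)R_{\mathfrak m}=R_{\mathfrak m}$; if $\mathfrak m\cap\Z=(0)$ then $R_{\mathfrak m}$ is a localization of $(\Z\setminus\{0\})^{-1}R=\Q[X]$ and $I\Q[X]=\Q[X]$ forces $IR_{\mathfrak m}=R_{\mathfrak m}$. Thus $I$ lies in no maximal ideal of $R$, so $I=R$ and $[I]=0$.

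\emph{Surjectivity.} Since $\Phi$ is a homomorphism and every element of $\bigoplus_p\Pic(R_p)$ is a finite sum of elements supported at a single prime, it suffices to fix a prime $p_0$ and a class $[J]\in\Pic(R_{p_0})$ and produce $[I]\in\Pic(R)$ with $IR_{p_0}=J$ and $IR_p=R_p$ for all $p\ne p_0$. Normalize $J=(h_1,\dots,h_k)R_{p_0}$ with $h_i\in\Z[X]$, $\gcd_{\Q[X]}(h_i)=1$, and set $I:=J\cap R$ (inside $\Q(X)$). As $R\to R_p$ is flat and localization commutes with finite intersections, $IR_{p_0}=J\cap R_{p_0}=J$ and, for $p\ne p_0$, $IR_p=JR_p\cap R_p$ with $JR_p=(h_1,\dots,h_k)(R_{p_0}R_p)=(h_1,\dots,h_k)\Q[X]=\Q[X]$, so $IR_p=\Q[X]\cap R_p=R_p$. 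Consequently $I$ localizes to a principal ideal at every maximal ideal of $R$ — to $JR_{\mathfrak m}$ above $p_0$ (principal because $J$ is invertible), to $R_{\mathfrak m}$ above the other primes, and above $(0)$ to a localization of the PID $\Q[X]$. It then remains only to check that $I$ is finitely generated (hence, being locally principal, invertible), and this is the single step that is not purely formal.

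\emph{The main obstacle} is precisely this finite generation of $I=J\cap R$: the ideal $(h_1,\dots,h_k)R$ already has the right localizations above $p_0$ and above $(0)$, but at a few "exceptional'' primes $p\ne p_0$ (those for which the $h_i$ acquire a common factor modulo $p$) it need not equal $R_p$, so $I$ is the $S_{p_0}$‑saturation of $(h_1,\dots,h_k)R$ and its finite generation must be argued — for instance by choosing the representative $J$ of $[J]$ so that $(h_1,\dots,h_k)R$ is already $S_{p_0}$‑saturated (which occurs, say, whenever $p_0$ itself may be taken among the generators, since then $\Z\setminus p_0\Z$ acts invertibly on $R/(h_1,\dots,h_k)R$), or, in the situation relevant to this paper in which every $R_p$ is Dedekind, by replacing the ideal‑theoretic argument with the divisor‑class description: there $\operatorname{Div}(R)=\bigl(\bigoplus_{q}\Z\,\mathfrak P_q\bigr)\oplus\bigoplus_p\bigl(\bigoplus_{\alpha\in E_p}\Z\,\mathfrak M_{p,\alpha}\bigr)$, the non‑unitary part is absorbed by the relations $\operatorname{div}(q)$, and the relations surviving in the $p$‑th summand are exactly $\Z\cdot\operatorname{div}_{R_p}(p)$, coming from rational constants, which gives $\Pic(R)\cong\bigoplus_p\bigl((\bigoplus_{\alpha\in E_p}\Z\,\mathfrak M_{p,\alpha})/\Z\operatorname{div}_{R_p}(p)\bigr)\cong\bigoplus_p\Pic(R_p)$ at once. (Alternatively one can peel off one prime at a time from the cartesian square $R=R_{p_0}\times_{\Q[X]}\bigl(\bigcap_{p\ne p_0}R_p\bigr)$, whose two maps are flat localizations, via the attached Mayer--Vietoris sequence for Picard groups together with $\Pic(\Q[X])=0$ and the surjectivity on units $\Z_{(p_0)}^{\times}\cdot\{\pm p_0^{\Z}\}\twoheadrightarrow\Q^{\times}$; the passage to the infinite direct sum is then controlled by the finiteness established above.)
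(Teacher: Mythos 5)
Your map, your reduction to representatives containing a nonzero integer, and your surjectivity candidate $J\cap R$ coincide with the paper's argument (which follows \cite[Theorem 1]{GHLS} and \cite[Lemma 1]{McQ}), but your injectivity step contains a genuine error. From ``$IR_p$ is principal and $I\Q[X]=\Q[X]$'' you infer that a generator of $IR_p$ is a nonzero rational \emph{and hence} that $IR_p=R_p$. The second inference is false: a nonzero rational is a unit of $R_p$ only when its $p$-adic valuation is zero (your own ``third fact''), and the generator may perfectly well be a positive power of $p$. The simplest case already breaks the argument: take $I=pR$, or, for a less visibly principal representative in the case $\uE=\hZ$, $I=(p,X^p-X)\Int(\Z)=p\,\Int(\Z)$; these are normalized in your sense ($f_i\in\Z[X]$, $\gcd_{\Q[X]}=1$), lie in the kernel, yet $IR_p=pR_p\neq R_p$ and $I\neq R$, so your chain of deductions would prove $pR=R$. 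What your normalization actually yields is that each $IR_p$ is generated by a power $p^{a_p}$ with $0\le a_p\le v_p(d)$, where $d\in I\cap\Z$, $d\neq 0$; hence $a_p=0$ for almost all $p$, and comparison of localizations at all maximal ideals gives $I=\bigl(\prod_p p^{a_p}\bigr)R$. The conclusion to aim for is that $I$ is \emph{principal}, i.e.\ $[I]=0$, not that $I=R$; this is exactly the paper's formulation (if $I\cap\Z=d\Z$ and $IR_p$ is principal, then $IR_p=dR_p$, whence $I=dR$).

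On surjectivity, your computation of the localizations of $I=J\cap R$ is correct, but you leave the finite generation (hence invertibility) of $I$ as an admitted gap, with only conditional or sketchy repairs. The gap is real but closes in one line, and your first suggestion is essentially it without the hedging: your normalized $J$ contains a nonzero integer, hence a power $p_0^{a}$ (divide out the prime-to-$p_0$ part, which is a unit of $R_{p_0}$), so you may always put $p_0^{a}$ among the generators. Then $I'=(h_1,\dots,h_k,p_0^{a})R$ is finitely generated, $I'R_{p_0}=J$, $I'R_q=R_q$ for $q\neq p_0$ (since $p_0^{a}$ is a unit of $R_q$), and $I'$ is locally principal, hence invertible; moreover $I'=J\cap R$, since the two ideals have the same localizations at every maximal ideal. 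Your divisor-theoretic and Mayer--Vietoris alternatives would need hypotheses (each $R_p$ Krull or Dedekind, or a patching statement for $\Pic$) that the lemma, stated for an arbitrary subset $\uE$, does not supply; the paper instead settles both delicate steps by referring to the proof in \cite{GHLS}. (A last minor point: your facts $R\cap\Q=\Z$ and $R_p^{*}=\Z_{(p)}^{*}$ presuppose $E_p\neq\emptyset$; when $E_p=\emptyset$ one has $R_p=\Q[X]$ and $\Pic(R_p)=0$, so the statement is unaffected, but the bookkeeping should say so.)
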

\begin{proof}
Let $R=\IntQ(\uE,\ohZ)$ and $R_p=(\Z\setminus p\Z)^{-1}R$, for $p\in\PP$; by Lemma \ref{localization}, $R_p=\IntQ(E_p,\overline{\Z_p})$. Since the proof follows by the same arguments of \cite[Theorem 1]{GHLS}, we just sketch it and refer to the cited paper for the details. By a classical argument (see for example \cite[Lemma 1]{McQ}), every finitely generated ideal $J$ of $R$ (in particular, every invertible ideal of $R$) is isomorphic to a finitely generated unitary ideal $I$, that is, $I\cap\Z=d\Z\not=(0)$. For such an ideal,  $(I\cap\Z)_{(p)}=\Z_{(p)}$ for all $p\in\PP$ not dividing $d$,  so $IR_p=R_p$. This argument shows that we have a well-defined map from $\Pic(R)$ to $\bigoplus_{p\in\PP}\Pic(R_p)$.

If $I$ is a unitary ideal of $R$, say $I\cap\Z=d\Z$, such that $IR_p$ is principal, it is generated by $d$. Hence, $I$ and $dR$ have the same localizations at each prime $p\in\PP$, so they are equal. This shows that the previous map is injective.

For the surjectivity, it is sufficient to show that, if $J_p$ is an invertible unitary ideal of $R_p$, for some $p\in\PP$, then there exists an invertible ideal $J$ of $R$ such that $JR_p=J_p$ and $JR_q=R_q$ for each $q\in\PP\setminus\{p\}$. The ideal $J=J_p\cap R$ has the required properties.
\end{proof}

We may now characterize when a generalized ring of integer-valued polynomials $\IntQ(\uE,\ohZ)$ is Dedekind and describe its class group. 

\begin{Thm}\label{DedekindInt}
Let $\underline{E}=\prod_p E_p\subset\overline{\hZ}$ be a subset. Then $\IntQ(\uE,\ohZ)$ is a Dedekind domain  with finite residue fields of prime characteristic  if and only if $E_p$ is a finite set of transcendental elements over $\Q$ for each $p\in\PP$ and $\uE$ is polynomially factorizable.

In this case, the class group of $\Int_{\Q}(\underline{E},\hZ)$ is equal to a direct sum of a countable family of finitely generated abelian groups.
\end{Thm}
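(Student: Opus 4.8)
The plan is to reduce the problem to the local statements already at hand. Write $R=\IntQ(\uE,\ohZ)$ and, for each prime $p$, $R_p=\IntQ(E_p,\overline{\Z_p})$, which by Lemma~\ref{localization} equals the localization $(\Z\setminus p\Z)^{-1}R$. \emph{Forward direction:} assume $R$ is a Dedekind domain with finite residue fields. Then each $R_p$, being a localization of $R$, is Dedekind; its maximal ideals are $S^{-1}M$ for maximal $M\subset R$ disjoint from $S=\Z\setminus p\Z$, and $R_p/S^{-1}M\cong R/M$, so $R_p$ is Dedekind with finite residue fields. Proposition~\ref{finiteZp} then forces every $E_p$ to be a finite set of elements transcendental over $\Q$. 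For polynomial factorizability I would invoke that a Dedekind domain has finite character: given $g\in\Z[X]$ and $p\in\PP_{g,\uE}$, pick $\alpha\in E_p$ with $v_p(g(\alpha))>0$; then $\mathfrak{M}_{p,\alpha}$ is a nonzero, hence maximal, prime of $R$ containing $g$, and for distinct $p$ these are distinct since $\mathfrak{M}_{p,\alpha}\cap\Z=p\Z$. Since $g$ lies in only finitely many maximal ideals, $\PP_{g,\uE}$ is finite, which by the remark following Lemma~\ref{pol factorizable} is precisely polynomial factorizability of $\uE$.

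\emph{Reverse direction:} assume each $E_p$ is a finite set of transcendental elements and that $\uE$ is polynomially factorizable. Then $\uE$ is locally bounded, so $R$ is Pr\"ufer by Theorem~\ref{locally bounded Prufer}, and $R$ is almost Dedekind by \cite[Corollary~2.6]{Chang}; by \cite[Theorem~37.2]{Gilm} it suffices to verify that $R$ has finite character. Fix $0\ne f\in R$ and write $f=g/m$ with $g\in\Z[X]$ and $m\geq1$ an integer. The non-unitary maximal ideals containing $f$ are the $\mathfrak{P}_q$ with $q\mid g$ in $\Q[X]$, finitely many. A unitary maximal ideal $\mathfrak{M}_{p,\alpha}$ ($\alpha\in E_p$) contains $f$ exactly when $v_p(g(\alpha))>v_p(m)$: if $p\nmid m$ this forces $p\in\PP_{g,\uE}$, a finite set by polynomial factorizability and Lemma~\ref{pol factorizable}; if $p\mid m$ there are only finitely many such $p$; and over each of these finitely many primes $E_p$ is finite, so only finitely many $\mathfrak{M}_{p,\alpha}$ occur. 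Hence $f$ lies in finitely many maximal ideals, so $R$ has finite character and is Dedekind. Its residue fields are finite because, by \eqref{localization unitary} and \eqref{localization non-unitary}, its localizations at maximal ideals (the $\mathfrak{M}_{p,\alpha}$, $\alpha\in E_p$, and the $\mathfrak{P}_q$, $q\in\mathcal{P}^{\text{irr}}$) are the DVRs $V_{p,\alpha}$, with finite residue field $O_\alpha/M_\alpha$, or the DVRs $\Q[X]_{(q)}$, with residue field a finite extension of $\Q$.

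\emph{Class group, and the main obstacle:} once the equivalent conditions hold, Lemma~\ref{Picdirectsum} gives $\Pic(R)\cong\bigoplus_{p\in\PP}\Pic(R_p)$, which is the class group of $R$ since $R$ is Dedekind; each $R_p$ is Dedekind and, by Proposition~\ref{finiteZp}, has class group $\Z/e_p\Z\oplus\Z^{n_p-1}$, where $n_p=|E_p|$ and $e_p=\gcd\{e_\alpha\mid\alpha\in E_p\}$ (the trivial group when $E_p=\emptyset$), which is finitely generated. As $\PP$ is countable, this displays $\Pic(R)$ as a direct sum of a countable family of finitely generated abelian groups, as claimed. I expect the finite-character step of the reverse direction to be the crux: one must separate the generic primes, whose finiteness is exactly the content of polynomial factorizability, from the finitely many primes dividing the denominator of $f$, each of which contributes only finitely many maximal ideals because $E_p$ is finite — and only the conjunction of the two promotes the almost Dedekind domain $R$ to a Noetherian, hence Dedekind, domain.
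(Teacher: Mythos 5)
Your proof is correct and follows essentially the same route as the paper's: both directions reduce to Proposition \ref{finiteZp} through the localizations $\IntQ(E_p,\overline{\Z_p})$ of Lemma \ref{localization}, the decisive step is finite character (equivalently, finiteness of $\PP_{g,\uE}$ for every $g\in\Z[X]$ via Lemma \ref{pol factorizable}), and the class group claim is obtained from Lemma \ref{Picdirectsum}. The only cosmetic difference is that you promote $R$ to Dedekind via the almost-Dedekind criterion (\cite[Corollary 2.6]{Chang} together with \cite[Theorem 37.2]{Gilm}) where the paper uses the Krull-domain criterion \cite[Theorem 43.16]{Gilm}; the finite-character computation itself is the same.
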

\begin{proof}
Let $R=\IntQ(\uE,\ohZ)$ and suppose the conditions for $\uE$ in the statement are satisfied. In particular, $\uE$ is locally bounded  and closed so, by Theorem \ref{locally bounded Prufer}, $R$ is Pr\"ufer. For $R$ to be Dedekind,  it is sufficient to show that it is a Krull domain (\cite[Theorem 43.16]{Gilm}). By assumption, each of the  unitary  valuation overrings of $R$ in the representation     \eqref{IntQEZrepr} is a DVR with finite residue field,  so $R$ has finite residue fields of prime characteristic by Proposition \ref{prime ideals}. We have to show that $R$ has finite character, that is, for each non-zero $f=\frac{g}{n}\in R$, $g\in\Z[X]$ and $n\in\Z\setminus\{0\}$, $f$ is contained in only finitely many maximal ideals of $R$. As in the proof of Proposition \ref{finiteZp}, $f$ is contained in only finitely many non-unitary prime ideals of $R$. We now check the maximal unitary ideals of $R$, described in the Proposition \ref{prime ideals}, which  contain $f$. Since the denominator $n$ of $f$ is divisible by only finitely many  $p\in\PP$, $f$ is contained in only finitely many maximal unitary ideals if and only if the same condition holds for $g$. Since $E_p$ is finite for each $p\in\PP$, this is equivalent to the finiteness of  the set $\PP_{g,\uE}$. Since $\uE$ is polynomially factorizable, by  Lemma \ref{pol factorizable}, $\PP_{g,\uE}$ is finite. 

Conversely, if $\IntQ(\uE,\ohZ)$ is a Dedekind domain  with finite residue fields of prime characteristic , then, for each prime $p$, the overring $\IntQ(E_p,\overline{\Z_p})$   is a Dedekind domain  with finite residue fields of prime characteristic  (\cite[Theorem 40.1]{Gilm}). By Proposition \ref{finiteZp}, $E_p$ is a finite subset of $\overline{\Z_p}$  formed by transcendental elements over $\Q$ (so, in particular, $\uE$ is locally bounded). If there exists some $g\in\Z[X]$ such that the set $\PP_{g,\uE}$ is infinite, then $g(X)$ would be contained in infinitely many unitary prime ideals of $\Int_{\Q}(\underline E,\overline{\hZ})$, a contradiction with \cite[Theorem 37.2]{Gilm}. Therefore, $\uE$ is polynomially factorizable by Lemma \ref{pol  factorizable}.

The final claim regarding the class group follows from Lemma \ref{Picdirectsum} and Proposition \ref{finiteZp}.
\end{proof}

The next corollary is a generalization of  \cite[Lemma 3.3]{GliSa}: it characterizes  the elements $\alpha$ in $\ohZ$ for which  the ring $\IntQ(\{\alpha\},\ohZ)$ is a PID.

\begin{Cor}\label{InQEZ PID}
Let $\underline{E}=\prod_p E_p\subset\overline{\hZ}$ be a subset  such that, for each $p\in\PP$, the elements of $E_p$ are pairwise non-conjugate over $\Q_p$. Then $\IntQ(\uE,\ohZ)$ is a PID  with finite residue fields of prime characteristic  if and only if, for each prime $p$, $E_p$ contains at most one element of $\overline{\Z_p}$, unramified over $\Q_p$ and transcendental over $\Q$ and $\uE$ is polynomially factorizable.
\end{Cor}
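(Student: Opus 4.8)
The plan is to reduce the characterization of PIDs to the case-by-case analysis of the local rings $\IntQ(E_p,\overline{\Z_p})$, exploiting the decomposition of the Picard group established in Lemma \ref{Picdirectsum}. First I would observe that a Dedekind domain is a PID if and only if its class group vanishes. By Theorem \ref{DedekindInt}, $\IntQ(\uE,\ohZ)$ is a Dedekind domain with finite residue fields precisely when each $E_p$ is a finite set of transcendental elements over $\Q$ and $\uE$ is polynomially factorizable; so under those hypotheses the class group exists and, by Lemma \ref{Picdirectsum}, it is the direct sum $\bigoplus_{p\in\PP}\Pic(\IntQ(E_p,\overline{\Z_p}))$. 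This direct sum is trivial if and only if each summand $\Pic(\IntQ(E_p,\overline{\Z_p}))$ is trivial, i.e.\ if and only if $\IntQ(E_p,\overline{\Z_p})$ is a PID for every prime $p$.

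The second step is to invoke the last statement of Proposition \ref{finiteZp}: for a finite set $E_p$ of transcendental elements, $\IntQ(E_p,\overline{\Z_p})$ is a PID exactly when $E_p$ contains at most one element which is transcendental over $\Q$ and unramified over $\Q_p$. Concretely, if $E_p=\{\alpha_1,\dots,\alpha_n\}$ with $n\geq 2$, the class group $\Z/e\Z\oplus\Z^{n-1}$ is nontrivial; and if $n=1$ with $E_p=\{\alpha\}$, the class group is $\Z/e_\alpha\Z$, which vanishes iff $e_\alpha=1$, i.e.\ iff $\alpha$ is unramified over $\Q_p$; and if $E_p=\emptyset$ then $\IntQ(E_p,\overline{\Z_p})=\Q[X]$, a PID. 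Combining this with the first step yields exactly the stated condition on each $E_p$. The polynomial factorizability hypothesis is simply carried over verbatim from Theorem \ref{DedekindInt}, since it is the criterion that makes the Prüfer domain $\IntQ(\uE,\ohZ)$ of finite character, hence Noetherian, in the first place.

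For the converse direction I would argue symmetrically: if for each $p$ the set $E_p$ contains at most one element of $\overline{\Z_p}$ that is unramified over $\Q_p$ and transcendental over $\Q$, and $\uE$ is polynomially factorizable, then each $E_p$ is in particular a finite set of transcendental elements, so by Theorem \ref{DedekindInt} the ring $\IntQ(\uE,\ohZ)$ is Dedekind with finite residue fields, and by the PID criterion of Proposition \ref{finiteZp} each local ring $\IntQ(E_p,\overline{\Z_p})$ is a PID; Lemma \ref{Picdirectsum} then forces the global class group to vanish, so $\IntQ(\uE,\ohZ)$ is a PID.

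I do not anticipate a genuine obstacle here, since all the substantive content — the Dedekind criterion, the local class group computation, and the splitting of the Picard group — is already in hand; the only mild subtlety is bookkeeping the degenerate cases (empty $E_p$, and the precise meaning of "at most one element unramified over $\Q_p$", which must be read as "at most one element, and if there is one it is unramified"), and checking that the polynomial factorizability condition is the correct common hypothesis rather than something that could be weakened once we are in the PID regime — it cannot, because it is already needed for Noetherianity.
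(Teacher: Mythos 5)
Your proposal is correct and follows exactly the paper's route: the paper's proof of Corollary \ref{InQEZ PID} is precisely the combination of Theorem \ref{DedekindInt}, Lemma \ref{Picdirectsum} and Proposition \ref{finiteZp}, with the class group vanishing iff each local summand $\Pic(\IntQ(E_p,\overline{\Z_p}))$ vanishes. Your write-up merely spells out the details (including the degenerate cases) that the paper leaves implicit.
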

Note that if the conditions of the corollary occur, namely, $E_p=\{\alpha_p\}$ for each $p\in\PP$, then $\uE$ is the singleton $\{\alpha\}$, where $\alpha=(\alpha_p)\in\ohZ$.  The condition that $\uE$ is polynomially factorizable appears in another equivalent forms in \cite[Lemma 3.3]{GliSa} and \cite[Proposition 1.1]{GliSgaSa}, in the case $\alpha\in\hZ$.

\begin{proof} The proof follows from Theorem \ref{DedekindInt}, Lemma \ref{Picdirectsum}  and Proposition \ref{finiteZp}. 
\end{proof}

An argument similar to the one in the proof of \cite[Theorem]{EakHei} shows that a PID $\IntQ(\{\alpha\},\ohZ)$ as in the statement of Corollary \ref{InQEZ PID} is never an Euclidean domain.

We now show that each Dedekind domain  with finite residue fields of prime characteristic  between $\Z[X]$ and $\Q[X]$ is indeed a generalized ring of integer-valued polynomials.

\begin{Thm}\label{RDedekind}
Let $R$ be a Dedekind domain  with finite residue fields of prime characteristic  such that $\Z[X]\subset R\subseteq\Q[X]$. Then $R$ is equal to $\Int_{\Q}(\uE,\ohZ)$, for some subset $\uE=\prod_p E_p\subset\ohZ$ such that $E_p$ is a finite set of transcendental elements over $\Q$ for each prime $p$ and $\uE$ is polynomially factorizable.

In particular, the class group of $R$ is isomorphic to a direct sum of a countable family of finitely generated abelian groups.
\end{Thm}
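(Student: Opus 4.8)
The plan is to show that a Dedekind domain $R$ with finite residue fields satisfying $\Z[X]\subset R\subseteq\Q[X]$ is automatically an intersection of valuation domains of the shape appearing in \eqref{IntQEZrepr}, and then invoke Theorem \ref{DedekindInt}. First I would write $R$ as the intersection of its localizations at maximal ideals, which are DVRs since $R$ is Dedekind; equivalently $R=\bigcap_{V}V$ over all (DVR) valuation overrings $V$ of $R$. Each such $V$ has quotient field $\Q(X)$ and contains $\Z[X]$, hence restricts to a valuation domain on $\Q$: either the restriction is a field (so $V\cap\Q=\Q$ and $V$ is a localization $\Q[X]_{(q)}$ at an irreducible $q$, a "non-unitary" overring), or the restriction is $\Z_{(p)}$ for some prime $p$ (a "unitary" overring). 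For the unitary ones, I want to apply Theorem \ref{extension Vpa}: I must check that $pV=M_V^{e}$ for some $e\ge 1$ (i.e.\ the extension is "defectless"/the valuation is discrete over $p$, which holds since $V$ is a DVR, so $v(p)$ is the positive generator times some integer $e$) and that the residue field $V/M_V$ is a finite extension of $\mathbb{F}_p$, which is exactly the finite-residue-field hypothesis on $R$ together with the fact that residue fields of localizations of $R$ are residue fields at maximal ideals of $R$. Hence every unitary overring is $V_{p,\alpha}$ for some $\alpha\in\overline{\Z_p}$ (it lies in $\overline{\Z_p}$, not just $\overline{\Q_p}$, because $X\in\Z[X]\subseteq V$ forces $v_p(\alpha)\ge 0$).

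Next I would assemble the set $\uE$. For each prime $p$, let $E_p\subseteq\overline{\Z_p}$ be a set of representatives, one $\alpha$ per unitary overring $V_{p,\alpha}$ of $R$ with $V_{p,\alpha}\cap\Z=p\Z$ (chosen up to $\Q_p$-conjugacy, using \cite[Theorem 3.2]{PerTransc} that $V_{p,\alpha}=V_{p,\beta}$ iff $\alpha,\beta$ are conjugate over $\Q_p$), and set $\uE=\prod_p E_p$. Then from $R=\bigcap_V V$ and the representation \eqref{IntQEZrepr}–\eqref{IntQEpZprepr} I get $R=\bigcap_{p}\bigcap_{\alpha\in E_p}V_{p,\alpha}\cap\bigcap_{q}\Q[X]_{(q)}=\IntQ(\uE,\ohZ)$ --- here I should be slightly careful that $R$ really does contain $\bigcap_q \Q[X]_{(q)}=\Q[X]$... which it does not, since $R\subseteq\Q[X]$ and $R\ne\Q[X]$; rather, the non-unitary part of the intersection for $R$ ranges over \emph{all} irreducible $q$ because $\Q[X]=\bigcap_q\Q[X]_{(q)}$ and every such localization is an overring of $R$ (as $R\subseteq\Q[X]\subseteq\Q[X]_{(q)}$). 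So the identification $R=\IntQ(\uE,\ohZ)$ is exactly the matching of unitary overrings on both sides. I would also note $E_p\ne\emptyset$ for at least one $p$: otherwise $R=\Q[X]$, contradicting $\Z[X]\subset R$ strictly... actually the strict inclusion $\Z[X]\subset R$ only tells us $R\ne\Z[X]$; but $R\subseteq\Q[X]$ and $R$ being a proper overring forces $R\ne\Q[X]$ only if... in fact nothing forces $E_p\ne\emptyset$ globally, and that is fine --- the statement does not need it.

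Once $R=\IntQ(\uE,\ohZ)$ is established, the hypothesis that $R$ is Dedekind with finite residue fields, fed into the already-proved Theorem \ref{DedekindInt}, gives simultaneously that each $E_p$ is a finite set of transcendental elements over $\Q$, that $\uE$ is polynomially factorizable, and that the class group of $R$ is a direct sum of a countable family of finitely generated abelian groups; this closes the proof. The main obstacle I anticipate is the careful bookkeeping in the first step: verifying that an arbitrary DVR overring $V\supseteq\Z[X]$ with $V\cap\Z=p\Z$ genuinely satisfies the hypotheses of Theorem \ref{extension Vpa} --- in particular pinning down that $V/M_V$ is finite over $\mathbb{F}_p$ (one must connect "residue fields of $R$ are finite" to "residue fields of valuation overrings of $R$ are finite", using that a valuation overring of a Dedekind domain is a localization, hence its residue field is a residue field of $R$) and that $V$ is residually \emph{algebraic} over $\Z_{(p)}$ so that the relation $pV=M_V^e$ makes sense, i.e.\ that $v(p)>0$ and the value group over $v(p)$ is finitely generated --- all of which follow from $V$ being a DVR but deserve an explicit sentence. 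A secondary subtlety is ensuring no unitary overring is omitted or double-counted when passing to the $\Q_p$-conjugacy classes, which is handled by \cite[Theorem 3.2]{PerTransc}.
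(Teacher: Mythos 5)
Your proposal is correct and follows essentially the same route as the paper: write $R$ as the intersection of its localizations at maximal ideals, identify each unitary localization as some $V_{p,\alpha}$ with $\alpha\in\overline{\Z_p}$ via Theorem \ref{extension Vpa} (using the finite-residue-field hypothesis and $X\in R$), assemble $\uE=\prod_p E_p$ so that $R=\IntQ(\uE,\ohZ)$ through the representations \eqref{IntQEZrepr}--\eqref{IntQEpZprepr}, and then invoke Theorem \ref{DedekindInt} for finiteness, transcendence, polynomial factorizability, and the class-group statement. The extra bookkeeping you flag (non-unitary localizations being $\Q[X]_{(q)}\supseteq\Q[X]$, possible emptiness of some $E_p$, conjugacy via \cite[Theorem 3.2]{PerTransc}) is handled the same way, implicitly or explicitly, in the paper's proof.
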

\begin{proof}
Let $\PP_R=\{p\in \PP\mid \exists P\in\Spec(R) \text{ such that } P\cap\Z=p\Z\}$. Clearly, $\PP_R$ is empty if and only if $R=\Q[X]$; in this case for $\uE$ equal to the empty set we have the claim. Suppose $\PP_R$ is not  empty. For each $p\in\PP_R$, we denote by $\PP_{R,p}$ the set of unitary prime ideals of $R$ lying above $p$. By assumption,  for each $P\in\PP_{R,p}$, $p\in\PP$,  $R_P$ is a DVR of $\Q(X)$ with finite residue field extending $\Z_{(p)}$. In particular, by Theorem \ref{extension Vpa}, there exists $\alpha_p\in\oZp$, transcendental over $\Q$, such that $R_P=V_{p,\alpha_p}$. Let $E_p$ be the subset of $\overline{\Z_p}$ formed by such $\alpha_p$'s, for each $P\in\PP_{R,p}$. Since $R$ is Dedekind and by \eqref{IntQEZrepr} and \eqref{IntQEpZprepr}, we have the following equalities: 
$$R=\bigcap_{p\in\PP_R}\bigcap_{P\in\PP_{R,p}}R_P\cap\Q[X]=\bigcap_{p\in\PP_R}\bigcap_{\alpha_p\in E_p}V_{p,\alpha_p}\cap\Q[X]=\bigcap_{p\in\PP_R}\Int_{\Q}(E_p,\overline{\Z_p})=\Int_{\Q}(\uE,\ohZ)$$ 
where $\uE=\prod_{p\in\PP_R}E_p\subset\ohZ$. By Theorem  \ref{DedekindInt},  for each $p\in\PP$, $E_p$ is a finite subset of $\overline{\Z_p}$ of transcendental elements over $\Q$, $\uE$ is polynomially factorizable and the class group of $R$ is isomorphic to a direct sum of a countable family of finitely generated abelian groups.
\end{proof}

It has been showed in \cite[Proposition 3.4]{GliSa} that  the cardinality of the set of $\alpha\in\hZ$ such that $\IntQ(\{\alpha\},\hZ)$ is a PID is $2^{\aleph_0}$.  The next corollary describes all the PIDs  with finite residue fields of prime characteristic  between $\Z[X]$ and $\Q[X]$.
\begin{Cor}\label{RPID}
Let $R$ be a PID  with finite residue fields of prime characteristic  such that $\Z[X]\subset R\subset\Q[X]$. Then $R$ is equal to $\IntQ(\{\alpha\},\ohZ)$, for some $\alpha=(\alpha_p)\in\ohZ$ such that, for each $p\in \PP$, $\alpha_p$ is transcendental over $\Q$, $\alpha_p$ is unramified over $\Q_p$ and $\{\alpha\}$ is polynomially factorizable.
\end{Cor}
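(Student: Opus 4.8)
The plan is to deduce Corollary \ref{RPID} directly from Theorem \ref{RDedekind} together with the local characterization of PIDs among these rings. Since a PID is in particular a Dedekind domain with trivial class group, and $R$ has finite residue fields with $\Z[X]\subset R\subset\Q[X]$, Theorem \ref{RDedekind} already gives $R=\IntQ(\uE,\ohZ)$ for some $\uE=\prod_p E_p\subset\ohZ$ with each $E_p$ a finite set of transcendental (over $\Q$) elements of $\overline{\Z_p}$ and $\uE$ polynomially factorizable. So the only work left is to upgrade the conclusion from ``Dedekind with trivial class group'' to the explicit description of $\uE$ claimed in the statement, namely that each $E_p$ is a singleton $\{\alpha_p\}$ with $\alpha_p$ unramified over $\Q_p$.

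First I would invoke the hypothesis that $R$ is a PID, hence $\Pic(R)=0$. By Lemma \ref{Picdirectsum} we have $\Pic(R)\cong\bigoplus_{p\in\PP}\Pic(\IntQ(E_p,\overline{\Z_p}))$, so $\Pic(\IntQ(E_p,\overline{\Z_p}))=0$ for every prime $p$; in other words each local ring $\IntQ(E_p,\overline{\Z_p})$ is a Dedekind domain (by Proposition \ref{finiteZp}, since $E_p$ is a finite set of transcendentals) with trivial class group, hence a PID. Now apply the last statement of Proposition \ref{finiteZp}: $\IntQ(E_p,\overline{\Z_p})$ is a PID if and only if $E_p$ contains at most one element which is transcendental over $\Q$ and unramified over $\Q_p$. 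Since we already know $E_p$ consists entirely of transcendental elements, this forces $E_p$ to be either empty or a singleton $\{\alpha_p\}$ with $\alpha_p$ unramified over $\Q_p$.

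Finally I would rule out $E_p=\emptyset$ using the strictness of the inclusion $R\subset\Q[X]$. Indeed, $\IntQ(E_p,\overline{\Z_p})=\Q[X]$ precisely when $E_p=\emptyset$, and by Lemma \ref{localization} we have $(\Z\setminus p\Z)^{-1}R=\IntQ(E_p,\overline{\Z_p})$; if every $E_p$ were empty then $R\subseteq\bigcap_{p}(\Z\setminus p\Z)^{-1}R=\bigcap_p\Q[X]=\Q[X]$ would be refined by the observation that a polynomial $f\in\Q[X]$ lies in $R$ iff it lies in every localization $(\Z\setminus p\Z)^{-1}R$, giving $R=\Q[X]$, contradicting $R\subset\Q[X]$. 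Hence at least one $E_p$ is nonempty, and more precisely the set $\PP_R$ of primes $p$ with $E_p\neq\emptyset$ coincides with the (nonempty) set of primes lying under a unitary prime of $R$, exactly as in the proof of Theorem \ref{RDedekind}. For the primes $p\notin\PP_R$ one has $E_p=\emptyset$, but to present the answer as a singleton $\{\alpha\}$ with $\alpha=(\alpha_p)\in\ohZ$ one may (as already noted in the remark after Corollary \ref{InQEZ PID}, and in the arguments of \cite[Remark 6.3]{ChabPer}) harmlessly fill in the empty components by choosing an arbitrary unramified transcendental $\alpha_p\in\overline{\Z_p}$ there without changing the ring, since $\IntQ(\{\alpha_p\},\overline{\Z_p})=\Q[X]$ can only happen if $\alpha_p$ is algebraic, so instead one argues directly that for $p\notin\PP_R$ no condition is imposed and writes $R=\IntQ(\uE,\ohZ)$ with $\uE=\prod_{p\in\PP_R}\{\alpha_p\}$, then re-indexes. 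The only mildly delicate point is this bookkeeping about empty components and the passage from ``$\prod_{p\in\PP_R}$'' to ``$\{\alpha\}$, $\alpha\in\ohZ$''; everything else is an immediate consequence of Theorem \ref{RDedekind}, Lemma \ref{Picdirectsum} and Proposition \ref{finiteZp}, so the proof is essentially a one-line reduction and the ``main obstacle'' is merely stating it cleanly.
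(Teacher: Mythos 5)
Your argument is correct and essentially identical to the paper's: the paper proves this corollary by citing Theorem \ref{RDedekind} together with Corollary \ref{InQEZ PID}, and since that corollary is itself proved via Lemma \ref{Picdirectsum} and Proposition \ref{finiteZp}, your proposal simply unpacks the same reduction. One small caution: your parenthetical suggestion that an empty component $E_p$ could be filled in with a transcendental $\alpha_p$ ``without changing the ring'' is false (adding such a component genuinely shrinks the ring, e.g.\ $1/p$ is no longer integer-valued there; compare Theorem \ref{equality}), but since you retract it and instead index over $\PP_R$ and re-index, your final argument stands, and in fact handles this bookkeeping more explicitly than the paper does.
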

\begin{proof}
The proof follows from  Theorem \ref{RDedekind} and Corollary \ref{InQEZ PID}. 
\end{proof}

\subsection{Equality of generalized rings of integer-valued polynomials}

Given two locally bounded closed subsets $\uE,\underline{F}$ of $\ohZ$, we characterize when the associated generalized ring of integer-valued polynomials $\IntQ(\uE,\ohZ)$, $\IntQ(\underline{F},\ohZ)$ are the same.

The following is a general result about integral extensions of ring of integer-valued polynomials. For an integral domain $D$ with quotient field $K$, let $\olK$ and $\olD$ be the algebraic closure of $K$ and the absolute integral closure of $D$, respectively. We let $G_K=\Gal(\olK/K)$ be the absolute Galois group of $K$.  For a subset $\Omega$ of $\olK$ we set $G_K(\Omega)=\{\sigma(a)\mid \sigma\in G_K,a\in \Omega\}=\bigcup_{\sigma\in G_K}\sigma(\Omega)$. We say that $\Omega$ is \emph{$G_K$-invariant} if $G_K(\Omega)=\Omega$. Note that in general we have 
\begin{equation}\label{Galois polynomial closure}
\Int_K(\Omega,\olD)=\Int_K(G_K(\Omega),\olD)
\end{equation}
because if $f(\alpha)\in \olD$ for some $f\in K[X]$ and $\alpha\in\Omega$ , then, for every $\sigma\in G_K$, we have  $f(\sigma(\alpha))=\sigma(f(\alpha))\in\olD$ because $\sigma(\overline D)\subseteq\overline D$ . 
\begin{Lem}\label{integral extension}
Let $D$ be an integrally closed domain with quotient field $K$. Let $\Omega\subset\olD$ be $G_K$-invariant. Let $F$ be an algebraic extension of $K$ containing $\Omega$. Then $\Int_F(\Omega,\olD)$ 
is the integral closure of $\Int_K(\Omega,\olD)$ in $F(X)$.
\end{Lem}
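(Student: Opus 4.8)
The plan is to prove the two standard containments of an integral-closure statement: first that $\Int_F(\Omega,\olD)$ is integral over $\Int_K(\Omega,\olD)$, and second that $\Int_F(\Omega,\olD)$ is integrally closed in $F(X)$ and contains the integral closure of $\Int_K(\Omega,\olD)$. For the first containment, take $g\in\Int_F(\Omega,\olD)$. Its coefficients lie in a finite extension $L/K$ inside $F$, so $g$ is a root of its characteristic (or minimal) polynomial over $K(X)$, whose coefficients are symmetric functions of the conjugates $\sigma_1(g),\dots,\sigma_m(g)$ under $\Gal(\widetilde{L}/K)$ acting on coefficients. The key point is that, because $\Omega$ is $G_K$-invariant and $g(\alpha)\in\olD$ for every $\alpha\in\Omega$, one gets $\sigma_i(g)(\alpha)=\sigma_i\bigl(g(\sigma_i^{-1}(\alpha))\bigr)\in\olD$ for every $\alpha\in\Omega$ — here one uses exactly the mechanism of \eqref{Galois polynomial closure}. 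Hence each $\sigma_i(g)$ lies in $\Int_{\widetilde L}(\Omega,\olD)$, and the symmetric functions of the $\sigma_i(g)$ lie in $K(X)$ and take values in $\olD\cap\olK = $ (the integral closure of $D$ in the relevant finite extension, which is contained in $\olD$) — more precisely they lie in $\Int_K(\Omega,\olD)$ since a $K(X)$-element that is also in $\Int_{\widetilde L}(\Omega,\olD)\cap K(X)$ is in $\Int_K(\Omega,\olD)$. Thus $g$ satisfies a monic polynomial over $\Int_K(\Omega,\olD)$, so it is integral over it.

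For the reverse containment I would show $\Int_F(\Omega,\olD)$ is integrally closed in $F(X)$; since it obviously contains $\Int_K(\Omega,\olD)$ and sits inside $F(X)$, it then contains the integral closure of $\Int_K(\Omega,\olD)$ in $F(X)$, giving equality with the first part. To see $\Int_F(\Omega,\olD)$ is integrally closed in $F(X)$: write $\Int_F(\Omega,\olD)=\bigcap_{\alpha\in\Omega} W_{F,\alpha}\cap F[X]$ where $W_{F,\alpha}=\{\varphi\in F(X)\mid \varphi(\alpha)\in\olD\}$, using that $D=\bigcap$ of the valuation/integrally-closed localizations dominating it (the analogue of \eqref{IntQEpZprepr}). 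Each $W_{F,\alpha}$ is the pullback under evaluation-at-$\alpha$ of the integrally closed ring $\olD\subseteq \olK$ (well, $\olD$ is integrally closed in $\olK$, being the absolute integral closure of $D$), intersected appropriately, hence $W_{F,\alpha}$ is integrally closed in $F(X)$; similarly $F[X]$ is integrally closed in $F(X)$. An intersection of subrings of $F(X)$ each integrally closed in $F(X)$ is again integrally closed in $F(X)$. Therefore $\Int_F(\Omega,\olD)$ is integrally closed in $F(X)$.

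Combining: the integral closure $C$ of $\Int_K(\Omega,\olD)$ in $F(X)$ satisfies $\Int_K(\Omega,\olD)\subseteq \Int_F(\Omega,\olD)\subseteq C$ by the second paragraph (since $\Int_F(\Omega,\olD)$ is integral over $\Int_K(\Omega,\olD)$ by the first paragraph, it lies in $C$) — wait, I should be careful about direction. The first paragraph gives $\Int_F(\Omega,\olD)\subseteq C$. The second paragraph gives: $\Int_F(\Omega,\olD)$ is integrally closed in $F(X)$ and contains $\Int_K(\Omega,\olD)$, hence contains the integral closure of $\Int_K(\Omega,\olD)$ in $F(X)$, i.e. $C\subseteq \Int_F(\Omega,\olD)$. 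The two inclusions give $C=\Int_F(\Omega,\olD)$, which is the claim.

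The main obstacle I expect is the bookkeeping in the first containment: making precise that the coefficients of the characteristic polynomial of $g$ over $K(X)$ actually land in $\Int_K(\Omega,\olD)$ rather than merely in $K(X)$. This requires knowing that $\olD\cap F' = $ the integral closure of $D$ in $F'$ for a finite subextension $F'$, that these integral closures are again integrally closed, and that symmetric functions of elements of $\olD$ lie in $\olD$ — all true since $\olD$ is a ring integral over $D$ — together with the $G_K$-invariance of $\Omega$ to ensure the conjugated polynomials $\sigma_i(g)$ are still integer-valued on all of $\Omega$. Once that is set up cleanly (reducing to a finite Galois extension $\widetilde L/K$ and invoking \eqref{Galois polynomial closure} for the single element $g$ and its conjugates) the argument is routine; everything else is standard integral-closure-of-intersections formalism.
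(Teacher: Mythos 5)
Your proposal is correct and follows essentially the same route as the paper: the integrality of $\Int_F(\Omega,\olD)$ over $\Int_K(\Omega,\olD)$ is obtained exactly as there, via the Galois conjugates $\sigma(f)$ (still integer-valued on $\Omega$ by the $G_K$-invariance of $\Omega$) and the elementary symmetric functions, which give a monic equation with coefficients in $\Int_K(\Omega,\olD)$. The only difference is that you prove the integral closedness of $\Int_F(\Omega,\olD)$ in $F(X)$ by hand, as an intersection of the integrally closed rings $W_{F,\alpha}$ and $F[X]$, whereas the paper simply cites \cite[Proposition IV.4.1]{CaCh} for this fact.
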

\begin{proof}
By \cite[Proposition IV.4.1]{CaCh}, $\Int_{\overline{K}}(\Omega,\olD)$ is integrally closed. In particular, $\Int_F(\Omega,\olD)=\Int_{\overline{K}}(\Omega,\olD)\cap F(X)$ is integrally closed, too. Hence, we just need to show that $\Int_K(\Omega,\olD)\subseteq\Int_F(\Omega,\olD)$ is an integral ring extension.

Without loss of generality, we may enlarge $F$ and suppose that $F$ is normal over $K$ (e.g., we may take $F=\olK$). Let $f\in \Int_F(\Omega,\olD)\subset F[X]$. In particular, $f$ is integral over $K[X]$, i.e., it satisfies a monic equation of the form:
$$f^n+g_{n-1}f^{n-1}+\ldots+g_1 f+g_0=0$$
where $g_i\in K[X]$, for $i=0,\ldots,n-1$. We claim that $g_i\in \Int_K(\Omega,\olD)$, for $i=0,\ldots,n-1$, which will prove the claim. In fact, let $\Phi(T)=T^n+g_{n-1}T^{n-1}+\ldots+g_0\in K[X][T]$  and suppose that $\Phi(T)$ is irreducible over $K(X)$. The roots of $\Phi(T)$ are  the conjugates of $f$ under the action of the Galois group $\Gal(F(X)/K(X))\cong{\rm Gal}(F/K)$, which acts on the coefficients of the polynomial $f$. If $\sigma\in\Gal(F/K)$, then $\sigma(f)\in \Int_F(\Omega,\olD)$. In fact, for each $\alpha\in \Omega$, since $\Omega$ is ${\rm Gal}(F/K)$-invariant, we have $\alpha=\sigma(\alpha')$ for some $\alpha'\in\Omega$, therefore $\sigma(f)(\alpha)=\sigma(f(\alpha'))$ which still is an element of $\olD$ (which likewise is left invariant under the action of ${\rm Gal}(F/K)$). Now, since each coefficient $g_i$ of $\Phi(T)$ is an elementary symmetric function of  the elements $\sigma(f), \sigma\in\Gal(F/K)$, we have $g_i(\alpha)\in \olD$, for each $\alpha\in \Omega$, thus $g_i\in\Int_K(\Omega,\olD)$, as claimed.
\end{proof}
In order to ease the notation, for each prime $p$, we set $G_p=G_{\Q_p}$, the absolute Galois group of $\Q_p$.

\begin{Thm}\label{equality}
Let $\uE=\prod_p E_p,\underline{F}=\prod_p F_p$ be locally bounded closed subsets of $\ohZ$. Then $\IntQ(\uE,\ohZ)=\IntQ(\underline{F},\ohZ)$ if and only if $G_p(E_p)=G_p(F_p)$,  for each $p\in\PP$.
\end{Thm}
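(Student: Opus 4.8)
The plan is to reduce the global equality to a local (i.e., prime-by-prime) statement and then to the known description of the unitary valuation overrings. First I would observe that, by the identity $\IntQ(\uE,\ohZ)=\bigcap_{p\in\PP}\IntQ(E_p,\overline{\Z_p})$ together with Lemma~\ref{localization} (which says $\IntQ(E_p,\overline{\Z_p})=(\Z\setminus p\Z)^{-1}\IntQ(\uE,\ohZ)$), two generalized rings of integer-valued polynomials are equal if and only if their localizations at each prime $p$ agree, i.e., $\IntQ(E_p,\overline{\Z_p})=\IntQ(F_p,\overline{\Z_p})$ for every $p\in\PP$. So it suffices to prove the local statement: for a fixed prime $p$ and locally bounded closed subsets $E_p,F_p\subseteq\overline{\Z_p}$ consisting (we may assume, after discarding algebraic elements, which do not affect the Dedekind/Pr\"ufer analysis — or handle directly) of relevant elements, one has $\IntQ(E_p,\overline{\Z_p})=\IntQ(F_p,\overline{\Z_p})$ iff $G_p(E_p)=G_p(F_p)$.

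For the local statement, the ``if'' direction is essentially equation~\eqref{Galois polynomial closure}: since a polynomial $f\in\Q[X]$ has coefficients fixed by $G_p$, $f(\alpha)\in\overline{\Z_p}$ implies $f(\sigma\alpha)=\sigma(f(\alpha))\in\overline{\Z_p}$ for all $\sigma\in G_p$, whence $\IntQ(E_p,\overline{\Z_p})=\IntQ(G_p(E_p),\overline{\Z_p})$, and the same for $F_p$; so $G_p(E_p)=G_p(F_p)$ forces equality of the two rings. The ``only if'' direction is the heart of the matter. Here I would use the representation~\eqref{IntQEpZprepr} of $\IntQ(E_p,\overline{\Z_p})$ as an intersection of the valuation domains $V_{p,\alpha}$, $\alpha\in E_p$, together with the non-unitary ones $\Q[X]_{(q)}$, and the fact (from the proof of Proposition~\ref{finiteZp}, via \cite[Theorem 3.2]{PerTransc}) that $V_{p,\alpha}=V_{p,\beta}$ if and only if $\alpha,\beta$ are conjugate over $\Q_p$. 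The unitary valuation overrings of $\IntQ(E_p,\overline{\Z_p})$ are therefore in bijection with the set of $G_p$-orbits meeting $E_p$, equivalently with $G_p(E_p)$ (as a $G_p$-set). Since two intersections of incomparable valuation domains of $\Q(X)$ that define the same ring must have the same set of minimal valuation overrings (this is the usual ``essential representation is unique'' phenomenon for Pr\"ufer domains of finite character, or one can invoke that the localizations of a Pr\"ufer domain at its maximal ideals are exactly the minimal valuation overrings), equality of the two rings yields equality of the associated sets of unitary valuation overrings, hence $G_p(E_p)=G_p(F_p)$.

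There is a subtlety I would need to address carefully, and this is where I expect the main obstacle: the two given subsets $E_p,F_p$ need not be ``saturated'' in any convenient sense (they need not be $G_p$-invariant, closed under the relevant equivalence, or disjoint from algebraic elements), so one must be sure that the passage ``equal rings $\Rightarrow$ equal sets of valuation overrings'' is legitimate without a finite-character hypothesis — $E_p,F_p$ need only be locally bounded and closed, not finite. The clean way around this is: replace $E_p$ by $G_p(E_p)$ and $F_p$ by $G_p(F_p)$ (using~\eqref{Galois polynomial closure}), note that these are still locally bounded and compact, and that each is a union of $G_p$-orbits each of which is a single conjugacy class giving exactly one valuation domain $V_{p,\alpha}$; then show that for any compact $G_p$-invariant $\Omega\subseteq\overline{\Z_p}$ (of bounded degree), the set of unitary valuation overrings of $\IntQ(\Omega,\overline{\Z_p})$ is precisely $\{V_{p,\alpha}\mid\alpha\in\Omega\}$ — this is exactly Proposition~\ref{prime ideals} applied with closedness, giving that every unitary maximal ideal is $\mathfrak{M}_{p,\alpha}$ for some $\alpha$ in the set. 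Finally, recovering $\Omega$ from the family $\{V_{p,\alpha}\}_{\alpha\in\Omega}$ uses that $\Omega\mapsto\{V_{p,\alpha}\}$ is injective on $G_p$-invariant sets because $V_{p,\alpha}$ determines the $G_p$-orbit of $\alpha$. Putting these together: $\IntQ(\uE,\ohZ)=\IntQ(\underline F,\ohZ)$ $\iff$ for all $p$, $\IntQ(G_p(E_p),\overline{\Z_p})=\IntQ(G_p(F_p),\overline{\Z_p})$ $\iff$ for all $p$, the unitary valuation overrings coincide $\iff$ for all $p$, $G_p(E_p)=G_p(F_p)$, which is the claim.
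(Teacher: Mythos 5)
Your proof is correct, but it takes a genuinely different route from the paper on the crucial ``only if'' direction. You argue spectrally: using Proposition~\ref{prime ideals} (which needs exactly the closedness and bounded-degree hypotheses you have), every unitary maximal ideal of the common ring $\IntQ(E_p,\overline{\Z_p})=\IntQ(F_p,\overline{\Z_p})$ is of the form $\mathfrak{M}_{p,\alpha}$ with $\alpha\in E_p$, respectively $\mathfrak{M}_{p,\beta}$ with $\beta\in F_p$, and by \eqref{localization unitary} the localizations at these ideals are precisely the $V_{p,\alpha}$, $\alpha\in E_p$, respectively $V_{p,\beta}$, $\beta\in F_p$; since this set of valuation overrings is intrinsic to the ring, and $V_{p,\alpha}=V_{p,\beta}$ if and only if $\alpha,\beta$ are conjugate over $\Q_p$ (\cite[Theorem 3.2]{PerTransc}), you get $G_p(E_p)=G_p(F_p)$. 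The paper instead never uses the description of the maximal spectrum here: it first passes from $\Q[X]$ to $\Q_p[X]$ by a $v_p$-adic approximation argument, saturates under $G_p$ via \eqref{Galois polynomial closure}, embeds everything in a finite Galois extension $K$ of $\Q_p$, reduces to $\Int_K(G_p(E_p),O_K)=\Int_K(G_p(F_p),O_K)$ through Lemma~\ref{integral extension} (integral closure of rings of integer-valued polynomials), and concludes by McQuillan's polynomial-closure lemma \cite[Lemma 2]{McQ Gilmer}. Your approach buys economy within the paper (it recycles Proposition~\ref{prime ideals} and \eqref{localization unitary} and avoids Lemma~\ref{integral extension} and the McQuillan input), at the price of leaning harder on the external conjugacy criterion of \cite[Theorem 3.2]{PerTransc} -- note you need it also for elements algebraic over $\Q$, where $V_{p,\alpha}$ has rank two, since the hypotheses of the theorem do not exclude them; the paper's route works purely at the level of polynomial closure and produces Lemma~\ref{integral extension} as a by-product. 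One caution: your parenthetical suggestion that one ``may assume, after discarding algebraic elements'' is not legitimate, since removing such elements changes the ring (e.g.\ $E_p=\{0\}$ gives a proper subring of $\Q[X]$); fortunately your actual argument never uses this reduction, as Proposition~\ref{prime ideals} and the conjugacy criterion, as stated, cover all elements of $\overline{\Z_p}$.
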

\begin{proof}
Clearly, $\IntQ(\uE,\ohZ)=\IntQ(\underline{F},\ohZ)$ if and only if the two rings have the same localization at each $p\in \PP$, that is, by Lemma \ref{localization}, $\IntQ(E_p,\overline{\Z_p})=\IntQ(F_p,\overline{\Z_p})$. Such a  condition is equivalent to $\Int_{\Q_p}(E_p,\overline{\Z_p})=\Int_{\Q_p}(F_p,\overline{\Z_p})$. In fact, one implication is obvious because $\IntQ(E_p,\overline{\Z_p})$ is the contraction to $\Q[X]$ of $\Int_{\Q_p}(E_p,\overline{\Z_p})$. Conversely, suppose that $\IntQ(E_p,\overline{\Z_p})=\IntQ(F_p,\overline{\Z_p})$ and let $f\in \Int_{\Q_p}(E_p,\overline{\Z_p})$, say $f(X)=\sum_i \alpha_i X^i$. We can choose $g\in\Q[X]$ sufficiently $v_p$-adically close to $f(X)$, that is, $g(X)=\sum_i a_i X^i$, where $v_p(\alpha_i-a_i)\geq n$ for each $i\geq0$, where $n\in\N$ is arbitrary large. In particular, $h=f-g\in p^n\Z_p[X]$, so, if $\alpha_p\in E_p$, it follows that $g(\alpha_p)=f(\alpha_p)+h(\alpha_p)\in\overline{\Z_p}$. Hence, $g\in\IntQ(E_p,\overline{\Z_p})=\IntQ(F_p,\overline{\Z_p})$. If now $\beta_p\in F_p$, we have $f(\beta_p)=g(\beta_p)+h(\beta_p)\in\overline{\Z_p}$, which proves that $f\in\Int_{\Q_p}(F_p,\overline{\Z_p})$. The other containment $\Int_{\Q_p}(F_p,\overline{\Z_p})\subseteq\Int_{\Q_p}(E_p,\overline{\Z_p})$ follows in the same way.

Let $p\in\PP$ be a fixed prime and set $\widehat{R}_{p,E_p}=\Int_{\Q_p}(E_p,\overline{\Z_p})$ and $\widehat{R}_{p,F_p}=\Int_{\Q_p}(F_p,\overline{\Z_p})$. Since $E_p, F_p$ are subsets of $\overline{\Z_p}$ of bounded degree, there exists a finite Galois extension $K$ of $\Q_p$ containing both of them. By \eqref{Galois polynomial closure}, $\widehat{R}_{p,E_p}=\Int_{\Q_p}(G_p(E_p),\overline{\Z_p})$ and $\widehat{R}_{p,F_p}=\Int_{\Q_p}(G_p(F_p),\overline{\Z_p})$. Clearly, $\widehat{R}_{p,E_p}$ and $\widehat{R}_{p,F_p}$ are equal if and only if they have the same integral closure in $K(X)$. By Lemma \ref{integral extension}, this amounts to say that 
\begin{equation}\label{equal int rings}
\Int_K(G_p(E_p),\overline{\Z_p})=\Int_K(G_p(F_p),\overline{\Z_p}).
\end{equation}
Note that the rings of the equality \eqref{equal int rings} are equal to $\Int_K(G_p(E_p),O_K)$, $\Int_K(G_p(F_p),O_K)$, respectively, where $O_K$ is the ring of integers of $K$. Moreover, $G_p(E_p)$ is a closed subset of $O_K$, being a finite union of closed sets $\sigma(E_p)$, $\sigma\in\Gal(K/\Q_p)$. Similarly, $G_p(F_p)$ is closed.

Finally, by \cite[Lemma 2]{McQ Gilmer}, equation \eqref{equal int rings} holds if and only if  $G_p(E_p)=G_p(F_p)$.\end{proof}  

In particular, Theorem \ref{equality} implies that  the rings $\IntQ(\{\alpha\},\hZ)$, $\alpha\in\hZ$, are in one-to-one correspondence with the elements of $\hZ$.

\section{Construction of a Dedekind domain with prescribed class group}\label{construction}

We review  Chang's construction in \cite{Chang} that we mentioned in the Introduction and modify it in order to show that, given a group $G$ which is the direct sum of a countable family of finitely generated abelian groups, there exists a Dedekind domain $R$  with finite residue fields of prime characteristic, $\Z[X]\subset R\subseteq \Q[X]$  such that the class group of $R$ is $G$. As in Eakin and Heinzer's result in \cite{EakHei}, we show first that the ring constructed by Chang  can also be represented as a generalized ring of integer-valued polynomials. In \cite[Lemma 3.4]{Chang} it is proved that for each $n\in\N$ and $p\in\PP$, there exists a DVR of $\Q(X)$ which is a residually algebraic extension of $\Z_{(p)}$ with ramification index equal to $n$; by means of Theorem \ref{extension Vpa}, we can give an explicit representation of such an extension in terms of a  valuation domain $V_{p,\alpha}$ associated to some $\alpha\in\overline{\Z_p}$ which generates a  totally ramified extension of $\Q_p$ of degree $n$.

Let $I$ be a countable set and  $G=\bigoplus_{i\in I}G_i$ be a direct sum of finitely generated abelian groups $G_i$. Suppose that for each $i\in I$ we have
$$G_i\cong\Z^{m_i}\oplus\Z/n_{i,1}\Z\oplus\ldots\oplus\Z/n_{i,k_i}\Z$$
for some uniquely determined non-negative integers $m_i,n_{i,1},\ldots,n_{i,k_i}$ with $n_{i,j}\mid n_{i,j+1}$. We partition $\PP$ into a family of finite subsets $\{\PP_i\}_{i\in I}$ each of which contains arbitrary chosen $1+k_i$ primes, namely $\PP_i=\{p_i,q_{i,1},\ldots,q_{i,k_i}\}$ and correspondingly for each $i\in I$ we fix the following $1+k_i$ sets:
\begin{itemize}
\item[i)] $E_{p_i}$ is a subset of $\Z_{p_i}$  of $m_i+1$ elements $\{\alpha_{p_i,1},\ldots,\alpha_{p_i,m_i+1}\}$ which are transcendental over $\Q$.
\item[ii)] for $j=1,\ldots,k_i$, $E_{q_{i,j}}=\{\alpha_{q_{i,j}}\}$ a singleton of $\overline{\Z_{q_{i,j}}}$ such that $\alpha_{q_{i,j}}$ is transcendental over $\Q$ and $n_{i,j}=e_{\alpha_{q_{i,j}}}$, the ramification index of $\Q_p(\alpha_{q_{i,j}})$ over $\Q_p$.
\end{itemize}
We set $\underline E_i=E_{p_i}\times\prod_{j=1}^{k_i}E_{q_{i,j}}$ and also 
$$R_i=\IntQ(E_{p_i},\Z_{p_i})\cap\bigcap_{j=1}^{k_i}\IntQ(E_{q_{i,j}},\overline{\Z}_{q_{i,j}})=\IntQ(\underline E_i,\overline{\hZ}).$$
Since each of the unitary valuation overrings of $R_i$, namely $V_{p,\alpha_p}$, $p\in\PP_i$ and $\alpha_p\in E_p$, is a DVR which is residually algebraic over $\mathbb{F}_{p}$ (\cite[Proposition 2.2]{PerTransc}),  by \cite[Theorem \& Corollary]{EakHei} $R_i$ is a Dedekind domain  with class group isomorphic to $G_i$.

We also set
$$R=\bigcap_{i\in I}R_i=\Int_{\Q}(\underline{E},\overline{\hZ})$$
where $\underline{E}=\prod_i \underline{E}_i$. By \cite[Corollary 2.6]{Chang}, $R$ is an almost Dedekind domain with class group isomorphic to $G$. 

As we already mentioned at the beginning of \S \ref{generalized not Noetherian}, the ring $R=\Int_{\Q}(\uE,\ohZ)$ is not Dedekind in general. By Theorem \ref{DedekindInt}, this happens precisely when  $\uE$ is polynomially factorizable. By a suitable modification of the above construction, we are going to show that there exists a polynomially factorizable subset $\uE$ of $\ohZ$  such that $R$ is Dedekind   with class group isomorphic to $G$, thus giving a positive answer to  \cite[Question 3.7]{Chang}.

\begin{Thm}
Let $G$ be a direct sum of a countable family $\{G_i\}_{i\in I}$ of  finitely generated abelian groups (which are not necessarily distinct). Then there exists a Dedekind domain $R$ between $\Z[X]$ and $\Q[X]$ with class group isomorphic to $G$. Moreover, for each $i\in I$, there exists a multiplicative subset $S_i$ of $\Z$ such that $S_i^{-1}R$ is a Dedekind domain with class group $G_i$.
\end{Thm}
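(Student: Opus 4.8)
The plan is to revisit Chang's construction recalled above and to pick the transcendental elements with more care, so that the resulting set $\uE=\prod_pE_p$ becomes polynomially factorizable; Theorem \ref{DedekindInt} will then at once give that $R=\IntQ(\uE,\ohZ)$ is a Dedekind domain with the prescribed class group, and Lemma \ref{localization} will handle the localizations $S_i^{-1}R$. If $I$ is finite there is nothing new to do --- Chang's original construction already yields a Dedekind domain, since only finitely many primes are involved and finite character is then automatic --- so one may assume $I=\N$. I would fix once and for all an enumeration $g_1,g_2,\dots$ of the primitive irreducible polynomials of positive degree in $\Z[X]$. Note that a genuine partition of $\PP$ is not needed: it suffices to use pairwise disjoint finite subsets $\PP_i\subset\PP$ and to set $E_p=\emptyset$ for $p\notin\bigcup_i\PP_i$, since $\IntQ(\emptyset,\overline{\Z_p})=\Q[X]$ and such a prime thus contributes a trivial summand to the Picard group by Lemma \ref{Picdirectsum}.

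The recursive construction proceeds as follows. Having treated stages $1,\dots,i-1$, choose $1+k_i$ distinct primes $p_i,q_{i,1},\dots,q_{i,k_i}$, all larger than every prime already used and larger than $\sum_{m=1}^i\deg g_m$, and set $\PP_i=\{p_i,q_{i,1},\dots,q_{i,k_i}\}$. Since $\prod_{m\le i}g_m$ is primitive by Gauss' Lemma and its degree is smaller than $p_i$, there is a residue class of $\mathbb{F}_{p_i}$ on which none of $g_1,\dots,g_i$ vanishes; lifting it to $\Z_{p_i}$ and using that a residue disc in $\Z_{p_i}$ is uncountable whereas it contains only countably many elements algebraic over $\Q$, I pick $m_i+1$ distinct elements $\alpha_{p_i,1},\dots,\alpha_{p_i,m_i+1}\in\Z_{p_i}$, transcendental over $\Q$, with $v_{p_i}(g_m(\alpha_{p_i,\ell}))=0$ for all $m\le i$ and all $\ell$; each of them is unramified over $\Q_{p_i}$ because $\Z_{p_i}\subset\Q_{p_i}$. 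For each $q=q_{i,j}$ I fix a totally ramified extension $L$ of $\Q_q$ of degree $n_{i,j}$ --- e.g. $L=\Q_q(q^{1/n_{i,j}})$, since $X^{n_{i,j}}-q$ is Eisenstein at $q$ --- with residue field $\mathbb{F}_q$; the subset of $O_L$ consisting of the $\gamma$ that reduce to a residue class on which none of $g_1,\dots,g_i$ vanishes and that lie outside every proper $\Q_q$-subfield of $L$ is open and nonempty (here $q>\sum_{m\le i}\deg g_m$ is used), hence uncountable, so it contains an element $\gamma$ transcendental over $\Q$; I put $\alpha_{q_{i,j}}=\gamma$, so that $\Q_q(\alpha_{q_{i,j}})=L$, $e_{\alpha_{q_{i,j}}}=n_{i,j}$, and $v_q(g_m(\alpha_{q_{i,j}}))=0$ for all $m\le i$. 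Finally let $E_p$ be the finite set of the elements attached above to $p$ when $p\in\bigcup_i\PP_i$, let $E_p=\emptyset$ otherwise, and set $\uE=\prod_pE_p$.

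It then remains to check the two properties the construction was designed for. Each $E_p$ is a finite set of elements transcendental over $\Q$. Moreover $\uE$ is polynomially factorizable: by the Remark following Lemma \ref{pol factorizable} it is enough to show that $\PP_{g,\uE}$ is finite for each irreducible $g\in\Z[X]$; this is clear when $g$ is (up to sign) a prime integer, and when $g$ is primitive of positive degree then $g=g_N$ for some $N$, and by construction $v_p(g_N(\alpha))=0$ for every $\alpha\in E_p$ whenever $p\in\PP_i$ with $i\ge N$, whence $\PP_{g,\uE}\subseteq\PP_1\cup\dots\cup\PP_{N-1}$ is finite. Therefore Theorem \ref{DedekindInt} applies: $R=\IntQ(\uE,\ohZ)$ is a Dedekind domain with finite residue fields, and $\Z[X]\subsetneq R\subsetneq\Q[X]$ (for instance $1/p_1\notin R$, whereas $g/p_1\in R\setminus\Z[X]$ for any monic $g\in\Z[X]$ whose reduction modulo $p_1$ vanishes at the residues of the elements of $E_{p_1}$). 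Its class group is computed with Lemma \ref{Picdirectsum}: the summands with $p\notin\bigcup_i\PP_i$ vanish, and grouping the primes of each $\PP_i$ and invoking Proposition \ref{finiteZp} --- together with the facts that the elements attached to $p_i$ are unramified while the one attached to $q_{i,j}$ has ramification index $n_{i,j}$ --- gives $\bigoplus_{p\in\PP_i}\Pic(\IntQ(E_p,\overline{\Z_p}))\cong\Z^{m_i}\oplus\bigoplus_{j=1}^{k_i}\Z/n_{i,j}\Z\cong G_i$, so that $\Pic(R)\cong\bigoplus_{i\in I}G_i=G$. For the final assertion, take $S_i$ to be the multiplicative subset of $\Z$ generated by $\PP\setminus\PP_i$: by Lemma \ref{localization}, $S_i^{-1}R=\bigcap_{p\in\PP_i}\IntQ(E_p,\overline{\Z_p})$, which (only finitely many primes being involved) is polynomially factorizable, hence Dedekind by Theorem \ref{DedekindInt}, with class group $\bigoplus_{p\in\PP_i}\Pic(\IntQ(E_p,\overline{\Z_p}))\cong G_i$ by the same grouping.

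The real work is the recursive choice of the $\alpha_p$, where one must simultaneously control transcendence over $\Q$, the prescribed ramification over $\Q_p$ (unramified at $p_i$, totally ramified of degree $n_{i,j}$ at $q_{i,j}$), and the finitely many conditions $v_p(g_m(\alpha_p))=0$ for $m\le i$. The mechanism that makes this possible is that the congruence conditions confine one to a nonempty clopen subset $U$ of the relevant compact residue disc --- nonempty precisely because the primes were enlarged enough that the residue field has more elements than the combined degree of $g_1,\dots,g_i$ --- and inside $U$ the two remaining requirements exclude only a countable subset (the elements algebraic over $\Q$) and, in the ramified case, a nowhere dense subset (the union of the proper $\Q_p$-subfields of $L$), so a sufficiently generic element of $U$ meets all the demands at once.
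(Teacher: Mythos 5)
Your proof is correct, and its skeleton is the same as the paper's: modify Chang's construction so that $\uE=\prod_p E_p$ becomes polynomially factorizable, then invoke Theorem \ref{DedekindInt} for Dedekindness, Lemma \ref{Picdirectsum} together with Proposition \ref{finiteZp} for the class group, and Lemma \ref{localization} for the localizations $S_i^{-1}R$. Where you genuinely diverge is in the mechanism forcing polynomial factorizability. The paper keeps the partition of $\PP$ arbitrary and makes one uniform choice: at each prime $p$ every element of $E_p$ is taken congruent to $\lfloor\log p\rfloor$ modulo the maximal ideal (at the ramified primes, a root of the Eisenstein polynomial $X^{n_q}-\tilde q$ with $\tilde q$ a transcendental uniformizer, shifted by $\lfloor\log q\rfloor$), so that $g(\alpha_p)\equiv g(\lfloor\log p\rfloor)\pmod{\pi_p}$ and the single estimate $\lim_{x\to\infty} g(\log x)/x=0$ disposes of all $g\in\Z[X]$ at once. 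You instead enumerate the primitive irreducible polynomials $g_1,g_2,\dots$, take the primes of the $i$-th block larger than all previously used ones and than $\sum_{m\le i}\deg g_m$, and place the elements of $E_p$ in residue classes avoiding the roots of $g_1,\dots,g_i$ modulo $p$, so that $\PP_{g_N,\uE}\subseteq\PP_1\cup\dots\cup\PP_{N-1}$, concluding via the remark following Lemma \ref{pol factorizable}. Your route is more elementary (root counting in $\mathbb{F}_p$ rather than the analytic shift trick), but it gives up the freedom of choosing the primes arbitrarily and requires the stage-by-stage bookkeeping; it also replaces the paper's explicit Eisenstein element by a genericity argument (a nonempty open subset of $O_L$ avoiding the finitely many proper $\Q_q$-subfields of $L$ and the countably many elements algebraic over $\Q$), which is sound. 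Two details you get right and that matter: the distinct unramified elements chosen in $\Z_{p_i}$ are automatically pairwise non-conjugate over $\Q_{p_i}$, so Proposition \ref{finiteZp} really yields $\Z^{m_i}$; and setting $E_p=\emptyset$ outside $\bigcup_i\PP_i$ instead of partitioning all of $\PP$ is harmless, since empty components contribute $\Q[X]$ to the intersection and a trivial summand in Lemma \ref{Picdirectsum}.
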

\begin{proof} We keep the notation used in the above construction. Let $\PP_r=\bigcup_{i\in I}(\PP_i\setminus \{p_i\})$. For each $q=q_{i,j}\in\PP_r$, for some $i\in I$ and $j\in\{1,\ldots,k_i\}$, we set  $n_q=n_{i,j}$. We choose  a uniformizer $\tilde{q}$ of $\Z_q$ which is transcendental over $\Q$. Let $\tilde\alpha_q\in\overline{\Z_q}$ be a root of the Eisenstein polynomial $X^{n_q}-\tilde{q}$. Clearly, $\tilde\alpha_q$ is still transcendental over $\Q$ and it is well-known that $\Q_q(\tilde\alpha_q)$ is a totally ramified extension of $\Q_q$ of degree $n_q$. We let now 
$\alpha_q=\tilde\alpha_q+\lfloor\log q\rfloor$: this is another generator of $\Q_q(\tilde\alpha_q)$ over $\Q_q$ which still is transcendental over $\Q$ and has $v_q$-adic valuation zero.  We then set $E_q=\{\alpha_q\}$ in the above construction.

Similarly, for each $p=p_i\in\PP\setminus\PP_r$, for some $i\in I$, let $m_p=m_{p_i}$. We choose distinct elements $\alpha_{p,i}\in \lfloor\log p\rfloor+p\Z_p$, for $i=1,\ldots,m_p+1$, which are transcendental over $\Q$ and set $E_p=\{\alpha_{p,1},\ldots,\alpha_{p,m_p+1}\}$.

We show now that with these choices    the subset $\uE=\prod_p E_p\subset\ohZ$ is polynomially factorizable, and therefore the corresponding domain $R=\IntQ(\uE,\ohZ)$ is a Dedekind domain by Theorem \ref{DedekindInt}. By Lemma \ref{pol factorizable}, we need to show that for each $g\in\Z[X]$, $\PP_{g,\uE}$ is finite. Let $g\in\Z[X]$ be a fixed polynomial. For  $\alpha=(\alpha_p)\in \uE$, we have: 
\begin{itemize}
\item[-] $\alpha_p=pa+\lfloor\log p\rfloor$, for some $a\in\Z_p$, if $p\in\PP\setminus\PP_r$.
\item[-] $\alpha_p=\tilde\alpha_p+\lfloor\log p\rfloor$, if $p\in\PP_r$,  where $\tilde\alpha_p$ is a root of an Eisenstein polynomial, so, in particular,  $v_p(\tilde\alpha_p)>0$.
\end{itemize}
For each $p\in\PP$, let $\pi_p$ be a uniformizer of $\Q_p(\alpha_p)$ (which is just $p$ if $p\notin\PP_r$). We then have
$$g(\alpha_p)\equiv g(\lfloor\log p\rfloor)\pmod {\pi_p}.$$
Now, for all $p$ sufficiently large, $g(\lfloor\log p\rfloor)$ is not divisible by $p$, since $\lim_{x\to\infty}\frac{g(\log x)}{x}=0$. Hence, $\PP_{g,\uE}$ is finite.

The fact that $\IntQ(\uE,\ohZ)$ has class group equal to $G$ follows either by \cite[Corollary 2.6]{Chang} or by applying Lemma \ref{Picdirectsum} and Proposition \ref{finiteZp}, by noting that $\Pic(\IntQ(E_p,\overline{\Z_p}))=\Z^{m_p}$ for each $p\in\PP\setminus\PP_r$ and $\Pic(\IntQ(E_q,\overline{\Z_q}))=\Z/n_q\Z$ for each $q\in\PP_r$.

For the last claim, if $i\in I$, we let $S_i$ be the multiplicative subset of $\Z$ generated by $\PP\setminus\PP_i$. Then, by Lemma \ref{localization}, $S_i^{-1}\Int_{\Q}(\uE,\ohZ)=\Int_{\Q}(\underline{E}_i,\ohZ)$  which has class group isomorphic to $G_i$ by Lemma \ref{Picdirectsum} and Proposition \ref{finiteZp}. 
\end{proof}

\subsection*{Acknowledgments}

The author wishes to thank Jean-Luc Chabert for reading a preliminary version of this paper.  The author also wishes also to thank the anonymous referees for their remarks.


\begin{thebibliography}{99}
\bibliographystyle{plain}

\bibitem{Bourb} N. Bourbaki, {\em Alg\`ebre commutative}, Hermann, Paris, 1961.

\bibitem{CaCh} P.-J. Cahen and J.-L. Chabert, \emph{Integer-valued Polynomials}, Amer. Math. Soc. Surveys and Monographs, {\bf 48}, Providence, 1997.

\bibitem{ChabPer} J.-L. Chabert, G. Peruginelli, \emph{Polynomial overrings of $\Int(\Z)$}, J. Commut. Algebra 8 (2016), no. 1, 1-28.

\bibitem{ChabPer1} J.-L. Chabert, G. Peruginelli, \emph{Adelic versions of the Weierstrass approximation theorem}.  J. Pure Appl. Algebra 222 (2018), no. 3, 568-584. 

\bibitem{Chang} G. W. Chang, \emph{The ideal class group of polynomial overrings of the ring of integers}, J. Korean Math. Soc. 59 (2022), No. 3, pp. 571-594.


\bibitem{ChangGer} G. W. Chang, A. Geroldinger, \emph{On Dedekind domains whose class groups are direct sums of cyclic groups}, J. Pure Appl. Algebra 228 (2024), no. 1, 107470.

\bibitem{Claborn} L. Claborn, \emph{Every abelian group is a class group}, Pacific J. Math. 18 (1966), 219-222. 

\bibitem{EakHei} P. Eakin, W. Heinzer, \emph{More noneuclidian PID's and Dedekind domains with prescribed class group}. Proc. Amer. Math. Soc. 40 (1973), 66-68.  

\bibitem{Frisch} S. Frisch. \emph{Integer-valued polynomials on algebras}. J. Algebra 373 (2013) 414 -- 425.

\bibitem{FrischCorr} S. Frisch. \emph{Corrigendum to "Integer-valued polynomials on algebras'' [J. Algebra 373 (2013) 414--425} J. Algebra 412 (2014), 282.

\bibitem{Gilm} R. Gilmer. \emph{Multiplicative ideal theory}, Corrected reprint of the 1972 edition,
Queen's University, Kingston, ON, 1992.

\bibitem{GilmHeinz} R. Gilmer and W. Heinzer, \emph{Irredundant intersections of valuation rings}, Math. Z. 103 (1968), 306-317.

\bibitem{GHLS} R. Gilmer, W. Heinzer, D.  Lantz, W. Smith,  \emph{The ring of integer-valued polynomials of a Dedekind domain}. Proc. Amer. Math. Soc. 108 (1990), no. 3, 673-681. 

\bibitem{GliSa} P. Glivick\'y, J. \u{S}aroch, \emph{Quasi-Euclidean subrings of $\Q[x]$},
Comm. Algebra 2013, 41:11, 4267-4277.

\bibitem{GliSgaSa} J. Glivick\'a, E. Sgallov\'a, J.  \u{S}aroch, \emph{Controlling Distribution of prime sequences in discretely ordered principal ideal subrings of $\Q[x]$}, Proc. Amer. Math. Soc. 151 (2023), no. 8, 3281-3290.


\bibitem{LopWer} K. A. Loper, N. J. Werner. \emph{Generalized Rings of Integer-valued Polynomials}, J. of Number Theory 132 (2012), 2481-2490.

\bibitem{McQ} D. L. McQuillan, \emph{Rings of integer-valued polynomials determined by finite sets} Proc. Roy. Irish Acad. Sect. A 85 (1985), no. 2, 177-184.

\bibitem{McQ Gilmer} D. L. McQuillan, \emph{On a theorem of R Gilmer}. J. Number Theory, 39 (1991) 3, 245-250.

\bibitem{PerPrufer} G. Peruginelli, \emph{Pr\"{u}fer intersection of valuation domains of a field of rational functions}. J. Algebra, 509 (2018) 240-262.

\bibitem{PerTransc} G. Peruginelli, \emph{Transcendental extensions of a valuation domain of rank one}, Proc. Amer. Math. Soc. 145 (2017), no. 10, 4211-4226.


\bibitem{PerWerNontrivial} G. Peruginelli, N. J. Werner. \emph{Non-triviality Conditions for Integer-valued Polynomials on Algebras},  Monatsh. Math. 183 (2017), no. 1, 177-189. 

\end{thebibliography}
\end{document}